\newtheorem{lemma}{Lemma}
\newtheorem{theorem}{Theorem}
\newcommand{\fs}{{\frak{s}}}
\title{Many Server Scaling of the N-System Under FCFS-ALIS}
\author{
Dongyuan Zhan\thanks{School of Management, University College London, Gower Street, London WC1E 6BT, United Kingdom; email: {\em d.zhan@ucl.ac.uk}}
\and
Gideon Weiss\thanks{
Department of Statistics,
The University of Haifa,
Mount Carmel 31905, Israel; email:
{\em gweiss@stat.haifa.ac.il}
Research supported in part by
Israel Science Foundation Grants 711/09 and 286/13.
}
}
\date{\today}
\begin{document}
\maketitle

\begin{abstract}
The N-System with independent Poisson arrivals and exponential server-dependent service times under first come first served and assign to longest idle server policy has explicit steady state distribution. We scale the arrival and the number of servers simultaneously, and obtain the fluid and central limit approximation for the steady state. This is the first step towards exploring the many server scaling limit behavior of general parallel service systems. 
\end{abstract}

\section{Introduction}
\label{sec.introduction}
In this paper we study the many server N-System shown in Figure \ref{fig.N}, with Poisson arrivals and exponential service times, under first come first served and assign to longest idle server policy (FCFS-ALIS), as the number of servers becomes large.  Before describing the model in detail, we will first discuss our motivation for studying this system.
  \begin{figure}[htb]
  \begin{center}
  \includegraphics[width=0.20\linewidth]{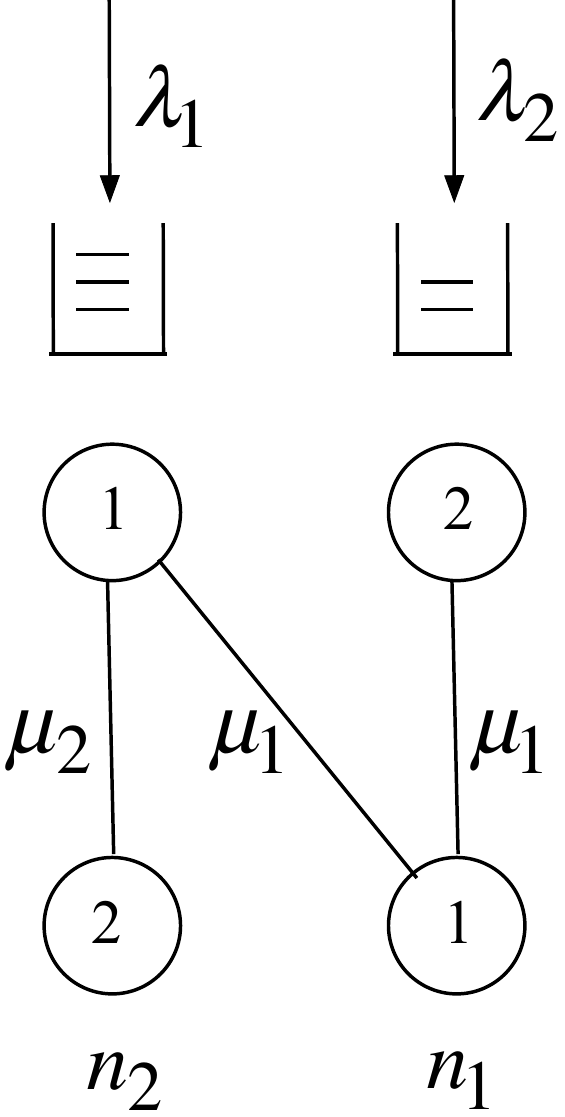}
  \end{center}
  \caption{The  multi-server N-System}
  \label{fig.N}
  \end{figure}

The N-System is one of the simplest special cases of the so called parallel server systems, as defined in \cite{foss-chernova:98,harrison-lopez:99} and further studied in \cite{harchol-balter-etal:99,williams:00,bell-williams:01,rubino-ata:09,armony-ward:10,armony-ward:13,gurvich-whitt:09,gurvich-whitt:10,adan-boon-weiss:14}.
The general model has customers of types $i=1,\ldots,I$,  servers of types $j=1,\ldots,J$, and a bipartite compatibility graph $G$ where $(i,j)\in G$ if customer  type $i$ can be served by server $j$.  Arrivals are renewal with rate $\lambda$, where successive customer types are i.i.d. with probabilities $\alpha_i$, there is a total of $n$ servers, of which $n \theta_j$ are of type $j$, and service times are generally distributed with rates $\mu_{i,j}$.  Assume the system is operated under the FCFS-ALIS policy, that is servers take on the longest waiting compatible customer, and arriving customers are assigned to the longest idle compatible server.  For this general system necessary and sufficient conditions for stability (positive Harris recurrence for given $\lambda$), or for complete resource pooling (there exists critical $\lambda_0$ such that the system is stable for $\lambda<\lambda_0$, and the queues of all customer types diverge for $\lambda>\lambda_0$) cannot be determined by 1st moment information alone (as shown by an example of Foss and Chernova \cite{foss-chernova:98}).  In particular, under FCFS-ALIS  calculation of the matching rates $r_{i,j}$, which are long term average fractions of services performed by servers of type $j$ on customers of type $i$, is intractable.

In the special case that service rates depend only on the server type, and not on customer type, with Poisson arrivals and exponential service times,  the system has a product form stationary distribution, as given in \cite{adan-weiss:14}.  In that case matching rates can be computed from the stationary distribution.

The following conjecture was made in \cite{adan-boon-weiss:14}: if the system is stable and has complete resource pooling for given $\lambda,\,n$, and we let both become large together, the behavior of the system simplifies:  there will exist $\beta_j$ such that servers of type $j$ perform a fraction $\beta_j$ of the services, and the matching rates $r_{i,j}$  will  converge to the rates for the FCFS infinite matching model with $G,\alpha,\beta$, as calculated in \cite{adan-weiss:11} (see also \cite{adan-busic-mairesse-weiss:15}).
The conjecture is based on the following heuristic argument:  in steady state the times that each server becomes available form a stationary process which is only mildly correlated with the other servers and so servers become available approximately as a superposition of almost independent stationary processes which in the many server limit becomes a Poisson process, and server types are then i.i.d. with probabilities $\beta_j$, while customer types arrive as an i.i.d. sequence with probabilities $\alpha_i$, which corresponds exactly to the model of FCFS infinite matching.

In our current study of the many server N-System we shall verify the conjectured many server  behavior for this simple parallel server system.  To do so we start from the known stationary distribution of the N-System with many servers, as derived from \cite{adan-weiss:14}, and study its behavior as $n\to\infty$.  As it turns out, the product form stationary distribution even for this simple case is far from simple, and the derivations of limits, which use summations over server permutations and asymptotic expansions of various expressions are quite laborious.  We feel that this emphasizes the difficulty of verifying the conjectured behavior of the general system, which remains intractable at this time.

We mention that the  N-System with just two servers, has been the subject of several papers, \cite{green:85,adan-foley-mcdonald:07,ghamami-ward:13,tezcan-dai:10}. In this paper, our focus is on the N-System with many servers under FCFS-ALIS policy, and its limit property.

The rest of the paper is structured as follows:  In Section \ref{sec.model} we describe the model, in Section \ref{sec.fluid} we use some heuristic arguments to obtain a guess at the limiting behavior.  In Section \ref{sec.stationary} we obtain the stationary behavior under many server scaling.  
In Section \ref{sec.numerics} we illustrate our results with some numerical examples.
To improve the readability of the paper we have put all the proofs for Section \ref{sec.stationary} in the appendix.


\section{The Model}
\label{sec.model}
In our N-System customers of types $c_1$ and $c_2$ arrive as independent Poisson streams,  with rates $\lambda_{1},\lambda_{2}$.   There are skill based parallel servers, $n_1$ servers of type $s_1$ which are flexible and can serve both types, and $n_2$  servers of type $s_2$ which can only serve type $c_1$ customers.  We assume service times are all independent exponential, with server dependent rates. The service rate of an $s_1$ server is $\mu_1$, the service rate of an $s_2$ server is $\mu_2$, see Fig \ref{fig.N}.   We let $\lambda=\,\lambda_1+\,\lambda_2,\,n=n_1+n_2$.  Service policy is FCFS-ALIS. 

The system is obviously Markovian.  In  \cite{adan-foley-mcdonald:07,adan-visschers-weiss:12,adan-weiss:14} the following state description for the server dependent Poisson exponential system with $J$ server types and $I$ customer types was used:  
imagine the customers arranged in a single queue by order of arrivals, and servers are attached to customers which they serve, and the remaining idle servers are arranged by increasing idle time, see Figure \ref{fig:ivostate}
  \begin{figure}[htb]
  \begin{center}
  \includegraphics[width=0.90\linewidth]{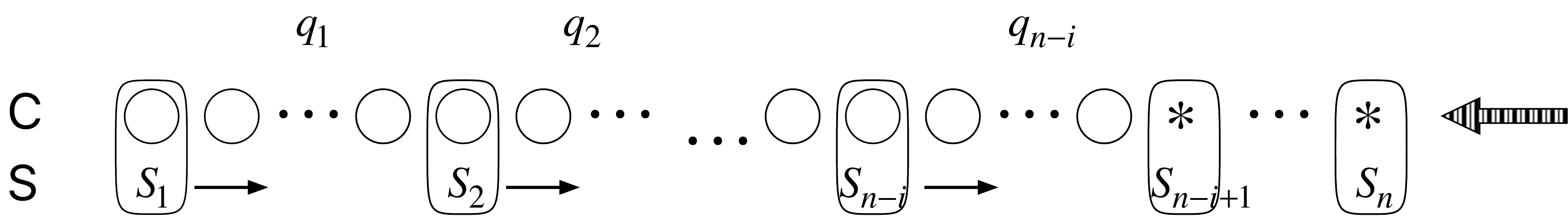}
  \end{center}
  \caption{state description under FCFS-ALIS}
  \label{fig.ivostate}
  \end{figure}
  The state is then $\frak{s}=(S_1,q_1,S_2,q_2,\ldots,S_{n-i},q_{n-i},S_{n-i+1},\ldots,S_n)$, where $S_1,\ldots,S_n$ is a permutation of the $n$ servers, the first $n-i$ servers are the ordered  busy servers, and the last $i$ servers are the ordered idle servers, and where $q_j,\,j=1,\ldots, n-i$ are the queue lengths of the customers waiting for one of the servers $S_1,\ldots, S_j$, and skipped (cannot be served) by  servers $S_{j+1},\ldots,S_n$.

 For the special case of the N-System, the following three random quantities are important:
  $i_1=I_1(\frak{s})$  the number of idle servers of type $s_1$,  $i_2=I_2(\frak{s})$  the number of idle servers of type $s_2$,  and  $k=K(\frak{s})\geq 0$  the number of servers of type $s_2$ which follow the last server of type $s_1$ in the sequence $S_1,\ldots,S_n$.  We let $i=I(\frak{s})$ be the total number of idle servers.
Because of the structure of the N-System, and the FCFS-ALIS policy the following properties hold for $i=0,\ldots,n$ and $k=0,\ldots,n_2$:
\begin{description}
\setlength{\itemsep}{3pt}
\item[(i)]
There are no customers waiting for any server which precedes the last  $s_1$ server in the permutation. In other words, for all $j < \min (n-k, n-i)$ we have $q_j=0$.  In particular, if there is an idle type $s_1$ server, in other words if $i > k$, then there are no waiting customers at all.
\item[(ii)]
If there are any idle servers, then there are no type $c_1$  customers waiting for service, in other words, if $i>0$ then all the waiting customers are of type $c_2$.
\item[(iii)]
If there are no idle servers, then only the last queue can contain type $c_1$ customers, in other words, if $i=0$ then the last queue may contain customers of both types, but all the other waiting customers are of type $c_2$.
\end{description}

Denote
\[
\alpha = \frac{\lambda_1}{\lambda},  \quad \theta=\frac{n_1}{n},  \quad \rho=\frac{\lambda}{n_1\mu_1+n_2\mu_2},  \quad  \delta=\frac{\lambda_2}{n_1 \mu_1}.
\]
Then a necessary and sufficient condition for stability is
\[
\rho <1 , \quad  \delta<1
\]
We shall require a stronger condition of complete resource pooling, defined by
\[
\alpha + \beta > 1
\]
where $\beta$ equals the long run fraction of services performed by $s_1$ servers.  The value of $\beta$ will be calculated in the next Section.

Using the results of \cite{adan-weiss:11,adan-weiss:14} we can then write the exact  stationary distribution of this system.   We wish to show that as   the arrival rates and the number of servers increase the system simplifies, and we get very precise many server scaling limits.
We will investigate the behavior of the system when $\alpha,\theta,\rho$ are fixed, and $n \to \infty$.
To be  precise, we shall then have $n, \lambda=\rho n,\,\lambda_1=\alpha \lambda,\,\lambda_2=(1-\alpha)\lambda$, $n_1=\lceil \theta n \rceil,\,n_2 = n-n_1$, all of which go to $\infty$.

\section{Fluid Calculations}
\label{sec.fluid}
We perform the following heuristic calculation:  As long as the system is underloaded ($\rho<1$), each server of type $s_1$ will have a cycle of service of mean length  $1/\mu_1$, followed by an idle period, and similarly each server of type $s_2$ will have service of mean length $1/\mu_2$ followed by an idle period.  The key idea now is that when $n\to\infty$, the idle periods should have the same length for both types, because of ALIS.  Let $T$ be the average length of the idle time.  The average cycle times will be:  $1/\mu_1+T$ and $1/\mu_2+T$.  Denote by $\beta$  the long run fraction of services performed by $s_1$ servers, and $1-\beta$ for type $s_2$.  The flow rate out of one type $s_1$ server is $1/(1/\mu_1+T)$, the flow rate out of all type $s_1$ servers should equal $\lambda\beta$. Similarly the flow rate out of all type $s_2$ servers should equal $\lambda(1-\beta)$. That is, 
\[
\lambda \beta = n_1/(1/\mu_1+T),     \qquad  \lambda (1-\beta)=n_2/(1/\mu_2+T).
\]
Now we solve for $T$ and $\beta$:  we rewrite
\[
\beta = \frac{n_1}{\lambda} \frac{1}{1/\mu_1+T}, \qquad  1- \beta = \frac{n_2}{\lambda} \frac{1}{1/\mu_2+T}
\]
and eliminate $\beta$:
\begin{equation*} 
1 = \frac{n_1}{\lambda} \frac{1}{1/\mu_1+T} +  \frac{n_2}{\lambda} \frac{1}{1/\mu_2+T}
\end{equation*}
to get a quadratic equation for $T$:
\[
g(T)= \lambda \mu_1 \mu_2 T^2 +\big( \lambda(\mu_1+\mu_2) - (n_1+n_2) \mu_1 \mu_2  \big) T
+ \lambda -n_1 \mu_1 - n_2 \mu_2 =0.
\]
Here $g(0)<0$ by  $\rho <1$, so the equation has one positive and one negative root.  Solving for positive $T$ we get:
\begin{equation}\label{eqn.Tsolution}
 \begin{aligned}
 T &=& \frac{1}{2} \left( \frac{n}{\lambda} -\frac{1}{\mu_1} - \frac{1}{\mu_2} +
 \sqrt{\frac{n^2}{\lambda^2} + 2 \,\frac{n_1-n_2}{\lambda}\Big(\frac{1}{\mu_1} - \frac{1}{\mu_2}\Big)
 + \Big(\frac{1}{\mu_1} - \frac{1}{\mu_2}\Big)^2} \right)
 \end{aligned}
\end{equation}
 Note:  For the case of $\mu_1 = \mu_2 = \mu$ we get $T=\frac{1-\rho}{\rho}\frac{1}{\mu}$.

From $T$ and little's law we can obtain the average number of idle servers in pool 1 and pool 2, denoted by $m_1$ and $m_2$ respectively.
\begin{equation}\label{eqn.MeanIdle}
m_1 = T\lambda\beta = \frac{Tn_1}{T+1/\mu_1},\qquad m_2 = T\lambda(1-\beta)=\frac{Tn_2}{T+1/\mu_2}.
\end{equation}
 The values of $\beta$ and $1-\beta$ are then:
 \[
 \beta = \frac{n_1}{\lambda T+\lambda/\mu_1}, \qquad
 1- \beta = \frac{n_2}{\lambda T+\lambda/\mu_2}
 \]
Note:  both are positive, so $0<\beta<1$.  Also, when $\mu_1=\mu_2$ we get $\beta=\theta$.

The value of  $\alpha$ does not come into the equation for $T$, or the calculation of $\beta$.
Hence, once we solve and obtain $\beta$, the property of complete resource pooling will consist of  checking that $\alpha > 1 - \beta$.

We will show that the following holds for the stationary queue, as $n\to\infty$:
\begin{itemize}
\item
$K(\frak{s})$ is distributed as a geometric random variable, taking values $0,1,2,\ldots$ with probability of success $1-\frac{1-\beta}{\alpha}$.  It is independent of $I_1(\frak{s}),I_2(\frak{s})$.
\item
$\left(I_1(\frak{s}),\,I_2(\frak{s})\right)$ is close to a bivariate Normal, with means $(m_1,m_2)$,  variances
\[
\left(\frac{(n_1-m_1)m_1(n_2m_1+m_2^2)}{n_1m_2^2+n_2m_1^2},\, \frac{(n_2-m_2)m_2(n_1m_2+m_1^2)}{n_1m_2^2+n_2m_1^2}\right),
\]
and correlation
\[
\left(\frac{(n_1-m_1)(n_2-m_2)m_1m_2}{(n_1m_2+m_1^2)(n_2m_1+m_2^2)}\right)^{\frac{1}{2}}.
\]
\item
Successive idle servers except for the last $K+1$ are i.i.d. of type $s_1$ with probability $\beta$ and of type $s_2$ with probability $1-\beta$.
\end{itemize}

\section{Many server limit of the stationary distribution}
\label{sec.stationary}

\subsection{Exact Stationary Distributions}
We first obtain the stationary distribution for each state $\frak{s}$.  We note that the stationary probabilities depend mainly on the values of $K(\frak{s}),\,I_1(\frak{s}),\,I_2(\frak{s})$. Let $\mu(S_j)$ denote the service rate of the server at position $j$.

\begin{theorem}
\label{thm.markov}
The stationary distribution of the state $\frak{s}$ of the FCFS-ALIS many server N-system is given by:
\begin{equation}\label{eqn.pistate}
\pi(\frak{s}) = \left\{ \begin{array}{ll}
\displaystyle
B \prod_{l=1}^{n-i_1-i_2} \left( \sum_{j=1}^l  \mu(S_j) \right)^{-1} \left(\frac{1}{\lambda}\right)^{i_1+i_2-k} \left(\frac{1}{\lambda_1}\right)^{k}, &
\begin{array}{l}
k=0,\ldots,n_2,\;  \\
i_1=1,\ldots,n_1,\; \\ i_2=k,\ldots,n_2,
\end{array} \\
\\
\displaystyle
B \prod_{l=1}^{n-k-1} \left( \sum_{j=1}^l  \mu(S_j) \right)^{-1} \prod_{j=n-k}^{n-i_2} \frac{\lambda_2^{q_j}}{(\mu_1n_1+\mu_2(j-n_1))^{q_j+1}}\;
\left(\frac{1}{\lambda_1}\right)^{i_2}, \quad &
\begin{array}{l}
k=1,\ldots,n_2,\; \\ i_1=0,\;\\ i_2=1,\ldots,k,
\end{array} \\
\\
\displaystyle
B \prod_{l=1}^{n-k-1} \left( \sum_{j=1}^l  \mu(S_j) \right)^{-1} \prod_{j=n-k}^{n-1} \frac{\lambda_2^{q_j}}{(\mu_1n_1+\mu_2(j-n_1))^{q_{j}+1}}\;
\frac{\lambda^{q_n}}{(\mu_1n_1+\mu_2n_2)^{q_n+1}}, &
\begin{array}{l}
k=0,\ldots,n_2,\;\\ i_1=i_2=0.
\end{array}
\end{array}\right.
\end{equation}
where $B$ is a normalizing constant.
\end{theorem}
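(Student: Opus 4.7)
The plan is to derive (\ref{eqn.pistate}) by invoking the general product-form stationary distribution for parallel server systems with Poisson arrivals, server-dependent exponential service times, and FCFS-ALIS policy established in \cite{adan-weiss:14,adan-weiss:11}, and specializing it to the N-system compatibility graph. The general formula asserts that for a state $\fs = (S_1, q_1, \ldots, S_{n-i}, q_{n-i}, S_{n-i+1}, \ldots, S_n)$ the stationary probability factors into three kinds of contributions: each busy position $l \in \{1, \ldots, n-i\}$ contributes $(\sum_{j=1}^l \mu(S_j))^{-1}$; each idle position $l$ contributes $1/\lambda_{\mathcal{C}(\{S_l, \ldots, S_n\})}$, where $\lambda_{\mathcal{C}(\cdot)}$ denotes the total Poisson rate of customer types compatible with at least one server in the tail; and each nonempty queue $q_j$ contributes $\lambda_W(j)^{q_j}/(\sum_{l \leq j} \mu(S_l))^{q_j+1}$, where $\lambda_W(j)$ is the total arrival rate of customer types that are skipped by all of $S_{j+1}, \ldots, S_n$.

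Next I would specialize each factor to the N-system. A tail containing at least one $s_1$ server is compatible with both customer types (rate $\lambda$); a tail consisting only of $s_2$ servers, which by the definition of $k$ occurs exactly when $l > n-k$, is compatible only with $c_1$ (rate $\lambda_1$). By property (i) queues can only be nonempty at positions $j \geq n-k$; for such $j$ the suffix $S_{j+1}, \ldots, S_n$ is entirely $s_2$, so customers in $q_j$ must be of type $c_2$ (rate $\lambda_2$), except that the terminal queue $q_n$ of Case 3 may by property (iii) hold both types (rate $\lambda$). For $j \geq n-k \geq n_1$ the prefix $S_1, \ldots, S_j$ contains all $n_1$ type-$s_1$ servers together with $j - n_1$ type-$s_2$ servers, so its total service rate is $\mu_1 n_1 + \mu_2(j - n_1)$, matching the denominator in the claimed queue factors.

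Assembling these ingredients yields the three cases directly. In Case 1 ($i_1 \geq 1$) no queues form, the $i_1+i_2-k$ idle positions whose tails still contain an $s_1$ server each contribute $1/\lambda$, and the $k$ terminal $s_2$-only positions contribute $1/\lambda_1$. In Case 2 ($i_1=0$, $1 \leq i_2 \leq k$) the queues at positions $j = n-k, \ldots, n-i_2$ give the $\lambda_2^{q_j}/(\mu_1 n_1 + \mu_2(j - n_1))^{q_j+1}$ factors, while the $i_2$ trailing $s_2$ idle positions give $(1/\lambda_1)^{i_2}$. Case 3 is identical to Case 2 except that the final queue $q_n$ is governed by $\lambda$ instead of $\lambda_2$. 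The busy product truncates at $l = n-k-1$ in Cases 2 and 3 precisely because for $l \geq n-k$ the prefix service-rate factor is already absorbed into the denominator of the queue-length factor and would otherwise be double-counted.

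The main obstacle is the careful bookkeeping required to align the three product ranges of the general formula with the precise index ranges in (\ref{eqn.pistate}), and in particular to confirm that no factor is either omitted or double-counted at the boundary $l = n-k$ where idle/queue/busy contributions meet. As an independent sanity check I would verify the resulting expression on a small example (e.g.\ $n_1 = n_2 = 1$) against direct solution of the global balance equations, and confirm that it degenerates correctly to the queue-free formula from \cite{adan-weiss:11} whenever every queue is empty.
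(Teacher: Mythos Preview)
Your proposal is correct and follows essentially the same route as the paper: invoke the general product-form result of \cite{adan-weiss:14} (their Theorem~2.1, equation~(2.1)) and specialize via properties (i)--(iii) to the N-system. The paper's own proof is in fact a one-line citation of exactly this kind, so your write-up is if anything more explicit than the original in tracking the idle-tail rates, the queue-content rates, and the boundary bookkeeping at $l=n-k$.
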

\begin{proof}
This follows for all three  parts of (\ref{eqn.pistate}) by utilizing properties (i),(ii),(iii) in Section~\ref{sec.model} and substituting into Equation (2.1), Theorem 2.1, in \cite{adan-weiss:14}.
\end{proof}

Before we manipulate equation (\ref{eqn.pistate}), we introduce a lemma to facilitate the calculation.
\begin{lemma}\label{thm.sum}
Let $A_1,\ldots,A_m$ denote a permutation of $m$ given positive real numbers $a_1,\ldots,a_m$, we have
\[
\sum_{(A_1,\ldots,A_m)\in\mathcal{P}(a_1,\ldots,a_m) } \prod_{l=1}^m \left( \sum_{j=1}^l  A_j \right)^{-1} =\left( \prod_{l=1}^m  a_l \right)^{-1}
\]
where $\mathcal{P}(a_1,\ldots,a_m)$ denotes the set of all the permutations of $a_1,\ldots,a_m$.
\end{lemma}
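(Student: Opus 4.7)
The cleanest approach is induction on $m$. The base case $m=1$ is immediate: the unique permutation gives $1/a_1$ on both sides. For the inductive step I would partition the permutations of $(a_1,\ldots,a_m)$ according to which element $a_i$ sits in the last position. The key observation is that $\sum_{j=1}^m A_j = S := \sum_k a_k$ is invariant across all permutations, so the factor $1/S$ pulls out of the sum, and the remaining product $\prod_{l=1}^{m-1}(\sum_{j=1}^l A_j)^{-1}$ ranges freely over all permutations of $\{a_k : k \neq i\}$. Applying the inductive hypothesis to each $i$ yields $\prod_{k\neq i} a_k^{-1} = a_i/\prod_k a_k$; summing over $i$ produces a factor $\sum_i a_i = S$ which cancels against $1/S$, leaving $1/\prod_k a_k$ as required.

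An alternative probabilistic proof is worth noting, and I would include it as a remark. Let $X_1,\ldots,X_m$ be independent with $X_i \sim \mathrm{Exp}(a_i)$. By the minimum-of-exponentials and memorylessness argument, $P(X_{\sigma(1)}<\cdots<X_{\sigma(m)}) = \prod_{l=1}^m a_{\sigma(l)}/\sum_{j=l}^m a_{\sigma(j)}$, and these probabilities sum to $1$ over $\sigma \in \mathcal{P}$. Dividing through by $\prod_k a_k$ and then reversing each permutation (a bijection on $\mathcal{P}$) converts the tail-sums $\sum_{j\geq l} a_{\sigma(j)}$ into the head-sums $\sum_{j\leq l} a_{\tau(j)}$ appearing in the lemma.

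There is no substantive obstacle; the identity is combinatorially routine. The one point to watch is the direction of conditioning in the induction: conditioning on the element at position $m$ works precisely because the denominator at that position is the invariant total $S$, whereas conditioning on position $1$ would leave all remaining partial sums offset by $a_i$, preventing a direct application of the inductive hypothesis.
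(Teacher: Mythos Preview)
Your proposal is correct and matches the paper's proof almost exactly: both proceed by induction on $m$, condition on which element occupies the last position, factor out the invariant total $1/S$, apply the inductive hypothesis to the remaining $m-1$ elements, and cancel the resulting $\sum_i a_i$ against $1/S$. The only cosmetic differences are that the paper starts the induction at $m=2$ rather than $m=1$, and does not include your probabilistic remark.
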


Now we can get the joint stationary distribution of $K(\frak{s}),\,I_1(\frak{s}),\,I_2(\frak{s})$. We denote by $\pi(k,i_1,i_2)$ the stationary probability of $K(\frak{s})=k$, $I_1(\frak{s})=i_1$ and $I_2(\frak{s})=i_2$.
\begin{theorem}
\label{thm.piI1I2K}
The steady state joint distribution of $K(\frak{s}),\,I_1(\frak{s}),\,I_2(\frak{s})$ is given by:
\begin{equation}
\label{eqn.piKI}
\pi(k,i_1,i_2) = \left\{ \begin{array}{ll}
\displaystyle
B_1 {n_1\choose i_1}{n_2\choose i_2} \frac{i_1 i_2! (i_1+i_2-k-1)!}{(i_2-k)!}\mu_1^{i_1}\mu_2^{i_2} \left(\frac{1}{\lambda}\right)^{i_1+i_2} \left(\frac{\lambda}{\lambda_1}\right)^{k}, &
\begin{array}{l}
k=0,\ldots,n_2,\;  \\
i_1=1,\ldots,n_1,\; \\ i_2=k,\ldots,n_2,
\end{array} \\
\\
\displaystyle
B_1 \frac{n_1\,n_2!}{(n_2-k)!}\mu_1\mu_2^{k} \prod_{j=n-k}^{n-i_2} \frac{1}{\mu_1n_1+\mu_2(j-n_1)-\lambda_2}\;
\left(\frac{1}{\lambda_1}\right)^{i_2}, \quad &
\begin{array}{l}
k=1,\ldots,n_2,\; \\ i_1=0,\;\\ i_2=1,\ldots,k,
\end{array} \\
\\
\displaystyle
B_1 \frac{n_1\,n_2!}{(n_2-k)!}\mu_1\mu_2^{k} \prod_{j=n-k}^{n-1} \frac{1}{\mu_1n_1+\mu_2(j-n_1)-\lambda_2}\;
\frac{1}{\mu_1n_1+\mu_2n_2-\lambda}, &
\begin{array}{l}
k=0,\ldots,n_2,\;\\ i_1=i_2=0.
\end{array}
\end{array}\right.
\end{equation}
where $B_1$ is a normalizing constant.
\end{theorem}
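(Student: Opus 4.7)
The plan is to integrate out the permutation and queue-length degrees of freedom in Theorem \ref{thm.markov}: for each of the three cases of (\ref{eqn.pistate}) I would sum $\pi(\frak{s})$ over all states with prescribed $(K,I_1,I_2)=(k,i_1,i_2)$ and verify that the result matches the corresponding case of (\ref{eqn.piKI}). The three cases of (\ref{eqn.pistate}) map one-to-one onto the three cases of (\ref{eqn.piKI}), so they can be handled separately.

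For each case I would first pin down the structural constraints that $(k,i_1,i_2)$ imposes on the permutation $(S_1,\ldots,S_n)$: (a) the busy block occupies positions $1,\ldots,n-i_1-i_2$ and contains $n_1-i_1$ type-$s_1$ and $n_2-i_2$ type-$s_2$ servers; (b) the last $k$ positions are type $s_2$; (c) position $n-k$ is type $s_1$ (an idle $s_1$ in case 1, a busy $s_1$ in cases 2 and 3). For the factor $\prod_{l=1}^{m}(\sum_{j=1}^l \mu(S_j))^{-1}$ (with $m=n-i_1-i_2$ in case 1 and $m=n-k-1$ in cases 2 and 3), the rates at positions $1,\ldots,m$ form a multiset with $a_1$ copies of $\mu_1$ and $a_2$ copies of $\mu_2$; summing over all orderings of the specific distinguishable servers appearing there collapses, by Lemma \ref{thm.sum}, to $(\mu_1^{a_1}\mu_2^{a_2})^{-1}$. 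The remaining combinatorial factors come from counting idle-section arrangements subject to (b) and (c): $\binom{n_1}{i_1}\binom{n_2}{i_2}$ for the choice of which specific servers are idle in case 1 (with $\binom{n_2}{i_2}$ alone in cases 2 and 3 since $i_1=0$), the factor $i_1$ for placing an idle $s_1$ at position $n-k$ in case 1 or $n_1$ for placing a busy $s_1$ there in cases 2 and 3, the falling-factorial $i_2!/(i_2-k)!$ for the ordered choice of type-$s_2$ servers sitting at the tail, and $(i_1+i_2-k-1)!$ for freely ordering the middle idle block in case 1.

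Cases 2 and 3 additionally require summing each queue length $q_j$ over the non-negative integers as a geometric series:
\begin{equation*}
\sum_{q\ge 0}\frac{\lambda_2^{q}}{(\mu_1 n_1+\mu_2(j-n_1))^{q+1}}=\frac{1}{\mu_1 n_1+\mu_2(j-n_1)-\lambda_2},
\end{equation*}
and analogously for the terminal $q_n$-factor of case 3, where the numerator $\lambda^{q_n}$ and denominator $\mu_1 n_1+\mu_2 n_2-\lambda$ appear instead. Convergence of these series is guaranteed by the stability conditions $\delta<1$ and $\rho<1$ recalled in Section \ref{sec.model}. Finally I would collect all prefactors and absorb the purely $n$-dependent constant $\mu_1^{n_1}\mu_2^{n_2}$ into a redefined normalizer $B_1=B\mu_1^{-n_1}\mu_2^{-n_2}$, so that $(\mu_1^{a_1}\mu_2^{a_2})^{-1}$ turns into $\mu_1^{i_1}\mu_2^{i_2}$ in case 1 and into $\mu_1\mu_2^{k}$ in cases 2 and 3, and the factor $(1/\lambda)^{i_1+i_2-k}(1/\lambda_1)^{k}$ is rewritten as $(1/\lambda)^{i_1+i_2}(\lambda/\lambda_1)^{k}$.

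The main obstacle I expect is the combinatorial bookkeeping: verifying that the ordered counting in case 1 really produces $i_1\, i_2!(i_1+i_2-k-1)!/(i_2-k)!$ (the interpretation being ``choose the idle $s_1$ at position $n-k$; choose and order $k$ of the $i_2$ idle $s_2$ servers at the tail; freely order the remaining $i_1+i_2-k-1$ idle servers between them''), and that in cases 2 and 3 the identity $\binom{n_2}{i_2}\,i_2!(n_2-i_2)!/(n_2-k)! = n_2!/(n_2-k)!$ together with the $n_1$ choices for position $n-k$ yields the prefactor $n_1\,n_2!/(n_2-k)!$ displayed there. Once the structural constraints and the rate multiset at the busy positions are identified correctly, the two applications of Lemma \ref{thm.sum} and the geometric-series sums are routine.
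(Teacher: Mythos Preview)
Your proposal is correct and follows essentially the same route as the paper: sum the geometric queue-length series in cases 2 and 3, apply Lemma \ref{thm.sum} to collapse the permutation sum over the front block to $(\mu_1^{a_1}\mu_2^{a_2})^{-1}$, count the arrangements of the tail block, and absorb $\mu_1^{-n_1}\mu_2^{-n_2}$ into $B_1$. Your combinatorial accounting in case 1 matches the paper's $\binom{n_1}{i_1}\binom{n_2}{i_2}\,i_1\binom{i_2}{k}(i_1+i_2-k-1)!\,k!$ exactly, and in cases 2 and 3 your identity $\binom{n_2}{i_2}\,i_2!\,(n_2-i_2)!/(n_2-k)! = n_2!/(n_2-k)!$ is just a slightly more elaborate way of arriving at the paper's direct count $n_1\binom{n_2}{k}k!$.
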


\subsection{The Distribution of $(I_1(\frak{s}),I_2(\frak{s}))$ Given $K(\frak{s})$}

In this section we obtain the asymptotic distribution of $(I_1(\frak{s}),I_2(\frak{s}))$ conditional on $K(\frak{s})=k$, as $n\to\infty$.  We first show that  as $n\to\infty$, the probability of no idle servers of type $s_1$ goes to zero, and so the probability that customers need not wait goes to 1.
 Next we condition on $K(\frak{s})=k$ and show  $I_1(\frak{s})/n \stackrel{p}{\longrightarrow} f_1,\,I_2(\frak{s})/n \stackrel{p}{\longrightarrow} f_2$, where
 \[
 f_1 = \frac{m_1}{n}= \frac{T \theta}{T+1/\mu_1}, \qquad
 f_2 = \frac{m_2}{n}= \frac{T (1-\theta)}{T+1/\mu_2},
  \]
 where $T$ is given in (\ref{eqn.Tsolution}).
 Finally, we condition on $K(\frak{s})=k$ and show that  the scaled and centered values of $(I_1(\frak{s}),I_2(\frak{s}))$ converge in distribution to a bivariate normal distribution.

\begin{theorem}
\label{prop.ZeroQueue}
When $n\to\infty$, as long as $\rho<1$, $\delta<1$,
\[
P(I_1(\frak{s})=0) \to 0.
\]
\end{theorem}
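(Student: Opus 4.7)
The plan is to show that the unnormalized stationary mass on $\{I_1(\frak{s})=0\}$, which is governed by the second and third cases of \eqref{eqn.piKI}, is exponentially small compared to $1/B_1$, whose dominant contribution comes from the first case of \eqref{eqn.piKI}.

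For the upper bound, introduce $a_j:=\mu_1 n_1+\mu_2(j-n_1)-\lambda_2$ and rewrite each case-2 summand by pairing the factor $a_{n-l-1}^{-1}$ inside the product with the matching factor $\mu_2(n_2-l)$ coming from $n_2!/(n_2-k)!$:
\[
\frac{\pi(k,0,i_2)}{B_1}=\frac{n_1\mu_1}{a_{n-i_2}}\prod_{l=0}^{i_2-1}\frac{\mu_2(n_2-l)}{\lambda_1}\prod_{l=i_2}^{k-1}\frac{\mu_2(n_2-l)}{a_{n-l-1}}.
\]
The ratios in the last product decrease in $l$ and, since $\rho<1$ and $\delta<1$, their maximum $\mu_2 n_2/[\mu_1 n_1+\mu_2(n_2-1)-\lambda_2]$ tends to a limit $r<1$, yielding a uniform bound $r^{*}<1$ and geometric decay in $k$. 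Summing in $k$ and then in $i_2$ (and treating case 3 analogously for the $i_2=0$ contribution) bounds $P(I_1=0)/B_1$ by at most $\exp(nE)$ for an explicit exponent $E$ determined by the asymptotics of $\prod_{l=0}^{i_2-1}\mu_2(n_2-l)/\lambda_1$.

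For the lower bound, keep the first case with $k=0$ and use the integral representation $(i_1+i_2-1)!=\int_0^\infty t^{i_1+i_2-1}e^{-t}\,dt$ to linearize the factorial. Interchanging sums with the integral and summing the resulting binomial series yields
\[
\frac{1}{B_1}\ge\frac{n_1\mu_1}{\lambda}\int_0^\infty (1+t\mu_1/\lambda)^{n_1-1}(1+t\mu_2/\lambda)^{n_2}e^{-t}\,dt.
\]
A Laplace expansion identifies the saddle point at $t^{*}=\lambda T$, with $T$ the fluid idle time of \eqref{eqn.Tsolution} (no coincidence: $T$ is exactly the root of the saddle equation, which matches the fluid equation $g(T)=0$). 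This produces $1/B_1\ge c\sqrt{n}\,\exp(nR)$ with
\[
R=\theta\log(1+T\mu_1)+(1-\theta)\log(1+T\mu_2)-\rho\bar\mu T,\qquad\bar\mu=\theta\mu_1+(1-\theta)\mu_2,
\]
and one checks $R>0$ by a short convexity argument (equivalently, $R\to 0^{+}$ as $\rho\uparrow 1$ and $R$ is strictly decreasing in $\rho$).

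Combining the two bounds gives $P(I_1=0)\le\exp\bigl(-(R-E)n+o(n)\bigr)$, so it suffices to verify $R>E$. The main technical obstacle is precisely this comparison of exponents: the case-1 integral is maximal at the saddle $t^{*}=\lambda T$ determined by the fluid equation, whereas the case-2/3 mass is constrained away from this saddle by the requirement $i_1=0$. A convexity-type inequality, driven by $g(T)=0$, gives the strict gap $R>E$ and hence $P(I_1=0)\to 0$.
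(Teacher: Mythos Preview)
Your overall architecture---bounding the unnormalized mass $P(I_1=0)/B_1$ from above and $1/B_1$ from below by exponentials $e^{nE}$ and $e^{nR}$---is the same as the paper's, but the techniques differ. The paper evaluates the case-2/3 sums \emph{exactly} via a telescoping induction (the sums collapse to a Poisson tail in the parameter $\kappa=\lambda_1/(\mu_2 n_2)$), whereas you use a direct geometric-decay bound; both yield the same rate $E=(1-\theta)(-\log\kappa+\kappa-1)_+$. For the lower bound the paper takes the single state $(\lceil m_1\rceil,\lceil m_2\rceil,0)$ and applies Stirling, whereas your integral representation with Laplace expansion is more elegant; the saddle $t^*=\lambda T$ reproduces the paper's exponent $R=\theta\log(1+T\mu_1)+(1-\theta)\log(1+T\mu_2)-\rho\bar\mu T$ exactly (using the fluid identity to rewrite $-\rho\bar\mu T=-\theta\,\frac{T\mu_1}{1+T\mu_1}-(1-\theta)\,\frac{T\mu_2}{1+T\mu_2}$).

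The genuine gap is the last step. You correctly identify the comparison $R>E$ as ``the main technical obstacle'' but then dispatch it with an unspecified ``convexity-type inequality, driven by $g(T)=0$.'' That cannot work: the fluid equation $g(T)=0$ depends only on $\rho,\theta,\mu_1,\mu_2$ and carries no information about $\alpha$, while $E$ does depend on $\alpha$ through $\kappa$. Concretely, take $\mu_1=\mu_2=1$, $\theta=\rho=\tfrac12$, $\alpha=0.3$. Then $\rho<1$ and $\delta=\lambda_2/(n_1\mu_1)=0.7<1$, yet $T=1$, $R=\log 2-\tfrac12\approx 0.193$, $\kappa=0.3$, and $E=\tfrac12(-\log 0.3-0.7)\approx 0.252$, so $R<E$ and your comparison fails. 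What is missing is the complete resource pooling hypothesis $\alpha+\beta>1$, a standing assumption in the paper. Under CRP one has $(n_2-m_2)\mu_2/\lambda_1=(1-\beta)/\alpha<1$, and the paper's step~(iii) uses precisely this to show
\[
\log\Bigl(1-\tfrac{m_2}{n_2}\Bigr)+\tfrac{m_2}{n_2}-\log\kappa+\kappa-1
=\log\Bigl(\tfrac{1-\beta}{\alpha}\Bigr)+\Bigl(1-\tfrac{1-\beta}{\alpha}\Bigr)\kappa<0,
\]
which together with the trivial $\log(1-m_1/n_1)+m_1/n_1<0$ gives $R>E$. You need to invoke CRP explicitly here; without it your $k=0$ lower bound on $1/B_1$ is simply not sharp enough (the dominant mass sits at $k$ of order $n$), and no inequality ``driven by $g(T)=0$'' will rescue the argument.
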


From this theorem we see that when $n\to\infty$, $P(I_1(\frak{s})>0)\to 1$. Therefore, $P(K(\frak{s})=k,I_1(\frak{s})>0) \to P(K(\frak{s})=k)$ for any $0\leq k\leq I_2(\frak{s})$.
From equation (\ref{eqn.piKI}), given $K(\frak{s})=k$, the limiting stationary distribution as $n\to\infty$ is
\begin{eqnarray*}
&&P(I_1(\frak{s})=i_1, I_2(\frak{s})\to i_2|K(\frak{s})=k)\to P(I_1(\frak{s})=i_1, I_2(\frak{s})=i_2|K(\frak{s})=k,I_1(\frak{s})>0) \\
&& = B_1 {n_1\choose i_1}{n_2\choose i_2}i_1(i_1+i_2-k)! \frac{i_2!}{(i_2-k)!} \mu_1^{i_1}\mu_2^{i_2}\lambda^{-i_1-i_2-k}\lambda_1^{-k} \frac{1}{P(K(\frak{s})=k)} .
\end{eqnarray*}

\begin{theorem}
\label{thm.FluidLimit}
Conditional on $K(\frak{s})=k$, $\left(\frac{I_1(\frak{s})}{n},\,\frac{I_2(\frak{s})}{n}\right)$  converge to $(f_1,\,f_2)$ in probability for any $k\geq 0$. That is, for any $\epsilon>0$, when $n\to\infty$, we have
\begin{eqnarray*}
&&P\left(|I_1(\frak{s})-m_1|\geq \epsilon n \mbox{ or } |I_2(\frak{s})-m_2|\geq \epsilon n |K(\frak{s})=k\right)\to 0.
\end{eqnarray*}
\end{theorem}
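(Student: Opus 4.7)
The plan is to run a Laplace/large-deviations argument directly on the explicit conditional mass function displayed just before the theorem, namely
\[
P(I_1=i_1,\,I_2=i_2\mid K=k)\;\propto\;\binom{n_1}{i_1}\binom{n_2}{i_2}\,i_1\,(i_1+i_2-k)!\,\frac{i_2!}{(i_2-k)!}\,\mu_1^{i_1}\mu_2^{i_2}\lambda^{-i_1-i_2},
\]
in which all $k$-dependent constants have been absorbed into the proportionality. Setting $x_j=i_j/n$ and applying Stirling factor by factor, I would obtain, uniformly for $(x_1,x_2)$ in any fixed compact subset of the open rectangle $\{0<x_1<\theta,\,0<x_2<1-\theta\}$,
\[
\tfrac{1}{n}\log P(I_1=i_1,I_2=i_2\mid K=k)\;=\;\phi(x_1,x_2)-\phi^{\ast}+O\!\bigl(\tfrac{\log n}{n}\bigr),
\]
where $\phi$ collects the two binomial-entropy terms, the Stirling tail $(x_1+x_2)\log(x_1+x_2)-(x_1+x_2)$ from $(i_1+i_2-k)!$, and the linear part $x_1\log\mu_1+x_2\log\mu_2-(x_1+x_2)\log\lambda$; the stray $(x_1+x_2)\log n$ from Stirling cancels against $\lambda^{-i_1-i_2}$ since $\lambda=\rho n(\theta\mu_1+(1-\theta)\mu_2)$, while $i_1$ together with $i_2!/(i_2-k)!\sim i_2^k$ are only $n^{O(1)}$ and so contribute to the $O(\log n/n)$ remainder.

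First I would locate the stationary point of $\phi$. Setting $\partial_{x_1}\phi=\partial_{x_2}\phi=0$ yields the pair
\[
\mu_1(\theta-x_1)(x_1+x_2)=x_1\,\rho\bigl(\theta\mu_1+(1-\theta)\mu_2\bigr),\qquad
\mu_2(1-\theta-x_2)(x_1+x_2)=x_2\,\rho\bigl(\theta\mu_1+(1-\theta)\mu_2\bigr).
\]
Using the identities $(n_j-m_j)\mu_j=m_j/T$ and $m_1+m_2=\lambda T$, both immediate from \eqref{eqn.MeanIdle}, one checks by direct substitution that $(f_1,f_2)=(m_1/n,m_2/n)$ is a solution. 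Uniqueness and strict maximality follow from verifying that $\phi$ is strictly concave in the interior, which reduces to the elementary inequality
\[
\Bigl(\tfrac{1}{x_1}+\tfrac{1}{\theta-x_1}-\tfrac{1}{x_1+x_2}\Bigr)\Bigl(\tfrac{1}{x_2}+\tfrac{1}{1-\theta-x_2}-\tfrac{1}{x_1+x_2}\Bigr)>\tfrac{1}{(x_1+x_2)^2},
\]
which is immediate from $\tfrac{1}{x_j}-\tfrac{1}{x_1+x_2}=\tfrac{x_{3-j}}{x_j(x_1+x_2)}$.

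To promote the mode identification to convergence in probability I would use this strict concavity to extract, for each $\epsilon>0$, a constant $c=c(\epsilon)>0$ with $\phi(x_1,x_2)\le\phi^{\ast}-c$ whenever $\max_j|x_j-f_j|\ge\epsilon$ and $(x_1,x_2)$ remains in a fixed compact subset of the open rectangle. Summing the Stirling estimate over the $O(n^2)$ lattice points in this core then bounds the corresponding tail conditional probability by $e^{-cn+O(\log n)}\to 0$.

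The main technical nuisance is the boundary of the rectangle, where Stirling degenerates and $\phi$ is not defined. I would handle the thin strips where one of $i_1$, $n_1-i_1$, $i_2-k$, or $n_2-i_2$ is small separately, using a direct ratio test on consecutive values of the explicit pmf: the one-step ratios are controlled by factors of the form $(n_j-i_j)(i_1+i_2)\mu_j/(i_j\lambda)$, which are uniformly bounded away from $1$ once $x_j$ is sufficiently far from $f_j$, giving geometric decay of the boundary tail mass. The special strip $i_1=O(1)$ is handled exactly as in Theorem~\ref{prop.ZeroQueue}. Pasting these boundary bounds onto the core Laplace estimate then delivers the theorem.
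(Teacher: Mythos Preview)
Your proposal is correct and follows essentially the same route as the paper: both arguments apply Stirling to the explicit conditional mass, identify the same strictly concave limit function with unique interior maximizer $(f_1,f_2)$ via the same Hessian inequality, and dispose of the boundary region by one-step ratio comparisons on the pmf (together with Theorem~\ref{prop.ZeroQueue} for $i_1=0$). The only organizational difference is that the paper first uses the ratio test to dominate every exterior point by a point on the $\epsilon$-boundary of the rectangle and then applies Stirling inside, whereas you run the Laplace estimate on a compact core first and invoke the ratio test only on thin boundary strips; the substance is the same.
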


After showing the fluid limit result,  we are now ready to show the central limit result.
\begin{theorem}
\label{thm.IdleDistribution}
For any $k\ge 0$, when $n\to\infty$, we have
\begin{equation}
\label{eqn.clt}
 \left(\left.\frac{I_1(\frak{s})- m_1 }{\sqrt{n}} ,\frac{I_2(\frak{s})-m_2}{\sqrt{n}} \right| K(\frak{s} = k) \right)
\Rightarrow  N \left( 0, \left[ \begin{array}{ll}  \sigma_1^2 & \rho \sigma_1 \sigma_2 \\
\rho \sigma_1 \sigma_2 & \sigma_2^2  \end{array} \right]  \right)
\end{equation}
where
\[
\rho = \left(\frac{(\theta-f_1)(1-\theta-f_2)f_1f_2}{(\theta f_2+f_1^2)((1-\theta)f_1+f_2^2)}\right)^{\frac{1}{2}},
\]
\[
\sigma_1 = \left(\frac{(\theta-f_1)f_1((1-\theta)f_1+f_2^2)}{\theta f_2^2+(1-\theta)f_1^2}\right)^{\frac{1}{2}},
\]
\[
\sigma_2 = \left(\frac{(1-\theta-f_2)f_2(\theta f_2+f_1^2)}{\theta f_2^2+(1-\theta)f_1^2}\right)^{\frac{1}{2}}.
\]
\end{theorem}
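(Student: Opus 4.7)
The plan is to perform a Laplace-type expansion of the conditional pmf $P(I_1(\frak{s})=i_1,\,I_2(\frak{s})=i_2\mid K(\frak{s})=k)$ displayed just before the statement. Because $k$ is held fixed while $n\to\infty$ and because $(I_1,I_2)$ concentrates near $(m_1,m_2)=\Theta(n)$ by Theorem \ref{thm.FluidLimit}, the factors $(i_1+i_2-k)!/(i_1+i_2)!\approx (i_1+i_2)^{-k}$, $i_2!/(i_2-k)!\approx i_2^k$ and the prefactor $i_1$ all contribute only smooth $O(1)$ multipliers near the fluid point and get absorbed into the normalization; they do not affect the limiting Gaussian shape. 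After Stirling, the leading-order log-pmf is, up to $(i_1,i_2)$-independent constants,
\begin{equation*}
\phi(i_1,i_2)=\log\binom{n_1}{i_1}+\log\binom{n_2}{i_2}+\log(i_1+i_2)!+i_1\log\mu_1+i_2\log\mu_2-(i_1+i_2)\log\lambda.
\end{equation*}

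A direct computation gives $\partial_{i_1}\phi=\log\{(n_1-i_1)(i_1+i_2)\mu_1/(i_1\lambda)\}$ and $\partial_{i_2}\phi=\log\{(n_2-i_2)(i_1+i_2)\mu_2/(i_2\lambda)\}$; setting both to zero yields the fluid balance relations $m_1\lambda=(n_1-m_1)(m_1+m_2)\mu_1$ and $m_2\lambda=(n_2-m_2)(m_1+m_2)\mu_2$, which hold at $(m_1,m_2)$ since $m_1+m_2=T\lambda$ (from Section \ref{sec.fluid}). Thus $(m_1,m_2)$ is the unique mode of $\phi$. The Hessian is
\begin{equation*}
H(i_1,i_2)=\begin{pmatrix}-\tfrac{1}{i_1}-\tfrac{1}{n_1-i_1}+\tfrac{1}{i_1+i_2} & \tfrac{1}{i_1+i_2}\\[3pt] \tfrac{1}{i_1+i_2} & -\tfrac{1}{i_2}-\tfrac{1}{n_2-i_2}+\tfrac{1}{i_1+i_2}\end{pmatrix},
\end{equation*}
so substituting $i_j=m_j+\sqrt{n}\,x_j$ and Taylor expanding gives $\phi-\phi(m_1,m_2)=-\tfrac12 x^T M_\infty x+O(n^{-1/2}|x|^3)$, where $M_\infty:=-\lim_n nH(m_1,m_2)$ has entries
\begin{equation*}
(M_\infty)_{11}=\frac{\theta}{f_1(\theta-f_1)}-\frac{1}{f_1+f_2},\quad (M_\infty)_{22}=\frac{1-\theta}{f_2(1-\theta-f_2)}-\frac{1}{f_1+f_2},\quad (M_\infty)_{12}=-\frac{1}{f_1+f_2}.
\end{equation*}
Combined with Theorem \ref{thm.FluidLimit}, which provides the required tightness, a standard local limit argument (uniform Stirling/Taylor remainders on compact sets in $x$) upgrades this pointwise convergence of pmfs to weak convergence of $((I_1-m_1)/\sqrt n,(I_2-m_2)/\sqrt n)$ to $N(0,M_\infty^{-1})$.

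It remains to invert $M_\infty$ and match the entries with the formulas in (\ref{eqn.clt}). Writing $a=\theta/[f_1(\theta-f_1)]$, $b=(1-\theta)/[f_2(1-\theta-f_2)]$, $c=1/(f_1+f_2)$, one obtains $\det M_\infty=ab-c(a+b)=[\theta f_2^2+(1-\theta)f_1^2]/[(f_1+f_2)f_1 f_2(\theta-f_1)(1-\theta-f_2)]$ after simplification, and then $\sigma_1^2=(b-c)/\det M_\infty$, $\sigma_2^2=(a-c)/\det M_\infty$, $\rho\sigma_1\sigma_2=c/\det M_\infty$ reduce by direct algebra to the claimed expressions. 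I expect this final algebraic verification — together with ensuring that Stirling and Taylor remainders are uniformly small on sets $|x|\le R$ for arbitrary $R$, and confirming that the $k$-dependent prefactors indeed behave as smooth $O(1)$ multipliers in that range — to be the most tedious step; everything else is essentially a textbook saddle-point / local CLT argument.
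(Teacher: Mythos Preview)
Your proposal is correct and follows essentially the same route as the paper: both apply Stirling to the conditional pmf, Taylor-expand around the fluid point $(m_1,m_2)$ using the substitution $i_j=m_j+z_j\sqrt{n}$, verify that the linear terms vanish via the fluid relations $m_1+m_2=\lambda T$, and read off the bivariate Gaussian from the resulting quadratic form; the $k$-dependent prefactors are handled identically in both, as $O(1)$ smooth multipliers that disappear into the normalization. Your packaging via the Hessian $H$ and the explicit inversion $M_\infty^{-1}$ is somewhat tidier than the paper's term-by-term expansion, but the substance is the same.
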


\subsection{The Distribution of $K(\frak{s})$, the Location of the First Type $s_1$ Server.}

\begin{theorem}
\label{thm.Kdist}
For any $k\geq 0$, as $n\to\infty$,
\begin{equation} \label{eqn.limK}
P(K(\frak{s})= k) \to \left(1-\frac{1-\beta}{\alpha}\right)\left(\frac{1-\beta}{\alpha}\right)^{k}.
\end{equation}
\end{theorem}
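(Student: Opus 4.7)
The plan is to derive a clean recursion for the ratio $P(K(\fs)=k+1)/P(K(\fs)=k)$ and then combine it with the normalisation $\sum_{k\ge0}P(K(\fs)=k)=1$. By Theorem~\ref{prop.ZeroQueue}, $P(I_1(\fs)=0)\to0$, so it suffices to compute $\lim_{n\to\infty}P(K(\fs)=k,I_1(\fs)\ge 1)$ for each fixed $k$. The first case of (\ref{eqn.piKI}) factors as $\pi(k,i_1,i_2)=B_1(\lambda/\lambda_1)^k a(k,i_1,i_2)$, where
\[
a(k,i_1,i_2)=\binom{n_1}{i_1}\binom{n_2}{i_2}\frac{i_1\,i_2!\,(i_1+i_2-k-1)!}{(i_2-k)!}\mu_1^{i_1}\mu_2^{i_2}\lambda^{-(i_1+i_2)},
\]
and an elementary cancellation gives $a(k+1,i_1,i_2)/a(k,i_1,i_2)=(i_2-k)/(i_1+i_2-k-1)$ whenever $i_2\ge k+1$. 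Summing this identity over $(i_1,i_2)$ and dividing yields
\[
\frac{P(K(\fs)=k+1,\,I_1(\fs)\ge 1)}{P(K(\fs)=k,\,I_1(\fs)\ge 1)}=\frac{1}{\alpha}\,E\!\left[\left.\frac{I_2(\fs)-k}{I_1(\fs)+I_2(\fs)-k-1}\mathbf{1}\{I_2(\fs)\ge k+1\}\,\right|\,K(\fs)=k,\,I_1(\fs)\ge 1\right].
\]

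Next apply Theorem~\ref{thm.FluidLimit}: conditional on $K(\fs)=k$ and $I_1(\fs)\ge 1$, $(I_1(\fs)/n, I_2(\fs)/n)\to(f_1,f_2)$ in probability. The integrand above is bounded by $1$, so bounded convergence produces the limit $f_2/(f_1+f_2)$, which by (\ref{eqn.MeanIdle}) equals $T\lambda(1-\beta)/(T\lambda)=1-\beta$. Combining with the $1/\alpha$ prefactor,
\[
\frac{P(K(\fs)=k+1)}{P(K(\fs)=k)}\longrightarrow r:=\frac{1-\beta}{\alpha},
\]
and complete resource pooling $\alpha+\beta>1$ gives $r<1$. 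Iterating, $P(K(\fs)=k)/P(K(\fs)=0)\to r^k$ for every fixed $k$.

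The remaining task is to identify the base value $P(K(\fs)=0)\to 1-r$. The plan is to establish a uniform geometric tail bound $P(K(\fs)\ge k)\le C\bar r^{\,k}$ with some $\bar r<1$ independent of $n$; granted this, dominated convergence applied to $\sum_{k}P(K(\fs)=k)=1$ yields $\sum_{k}\lim_n P(K(\fs)=k)=1$, which pins down $P(K(\fs)=0)\to 1-r$ and hence (\ref{eqn.limK}).

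\textbf{Main obstacle.} The delicate step is the uniform tail bound. The pointwise estimate $(i_2-k)/(i_1+i_2-k-1)\le 1$ only gives $P(K(\fs)=k+1)/P(K(\fs)=k)\le 1/\alpha$, which may exceed $1$ and is useless for tightness; one must instead quantify that the configurations achieving ratio close to $1$ (namely $i_1$ and $i_2-k$ both small) carry a vanishing share of the conditional mass, uniformly across $k$ of moderate size, while the tail of $k$ comparable to $n_2$ must be handled separately via cases~2 and~3 of (\ref{eqn.piKI}), or by a direct concentration estimate on $n_2-K(\fs)$.
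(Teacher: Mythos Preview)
Your route is the paper's: compute the consecutive ratio $\pi(k,i_1,i_2)/\pi(k-1,i_1,i_2)=\dfrac{i_2-k+1}{(i_1+i_2-k)\,\alpha}$ pointwise, push it through the fluid limit (Theorem~\ref{thm.FluidLimit}) to get $(1-\beta)/\alpha$, and then close with a tightness argument to fix the normalising constant.

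On the obstacle you isolate, the paper's resolution is simpler than you anticipate and uses nothing beyond what you already have on the table. The pointwise ratio is \emph{monotone in $k$}: for $i_1\ge 1$ one has $\dfrac{i_2-k+1}{i_1+i_2-k}\le \dfrac{i_2+1}{i_1+i_2}$ for every $k\ge 1$, since $\frac{a+c}{b+c}\ge \frac{a}{b}$ whenever $0<a\le b$ and $c>0$. Hence on the fluid rectangle $|i_j/n-f_j|<\epsilon$ the ratio is bounded, \emph{uniformly in $k$}, by
\[
\frac{(f_2+\epsilon)n+1}{(f_1+f_2)\,n\,\alpha}
=\frac{1-\beta}{\alpha}\Bigl(1+\frac{\epsilon}{f_2}+\frac{1}{f_2 n}\Bigr)=:\bar r,
\]
and complete resource pooling lets you choose $\epsilon$ small and $n$ large so that $\bar r<1$. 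This immediately yields $P\bigl(K\ge k_0,\ (I_1,I_2)\in\text{rectangle}\bigr)\le \dfrac{\bar r^{\,k_0}}{1-\bar r}\,P\bigl(K=0,\ (I_1,I_2)\in\text{rectangle}\bigr)\le \dfrac{\bar r^{\,k_0}}{1-\bar r}$, while the complement of the rectangle is handled by the fluid limit and Theorem~\ref{prop.ZeroQueue}. So your worry that ``the pointwise estimate only gives $1/\alpha$'' evaporates once you restrict to the rectangle, where $i_1$ is of order $f_1 n$ rather than $O(1)$. There is no need to treat $k$ of order $n_2$ separately, nor to revisit cases~2--3 of (\ref{eqn.piKI}); those states all have $I_1=0$ and are already absorbed by $P(I_1(\frak{s})=0)\to 0$.
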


\subsection{Summary of stationary distribution}

Theorem~\ref{thm.Kdist} shows that $K(\frak{s})$ converges in distribution to a geometric distribution, so $P(K(\frak{s})<\infty)=1$. Therefore, we can extend Theorem~\ref{thm.FluidLimit} and Theorem~\ref{thm.IdleDistribution} into unconditional versions.


\begin{theorem} \label{thm.Uncondition}
When $n\to\infty$, $K(\frak{s})$ becomes independent of $I_1(\frak{s})$ and $I_2(\frak{s})$. $\left(\frac{I_1(\frak{s})-m_1}{\sqrt{n}},\,\frac{I_2(\frak{s})-m_2}{\sqrt{n}}\right)$ converges in distribution to the bivariate Normal distribution described in \ref{eqn.clt}. 
\end{theorem}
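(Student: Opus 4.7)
The plan is to pass from the conditional convergence results of Theorems~\ref{thm.IdleDistribution} and~\ref{thm.Kdist} to a joint, unconditional statement by working with the joint characteristic function and invoking dominated convergence on the sum over $K$. Set
\[
\varphi_n(t_1,t_2,t_3)=E\!\left[\exp\!\Big(it_1\tfrac{I_1(\fs)-m_1}{\sqrt{n}}+it_2\tfrac{I_2(\fs)-m_2}{\sqrt{n}}+it_3 K(\fs)\Big)\right],
\]
and condition on $K(\fs)$:
\[
\varphi_n(t_1,t_2,t_3)=\sum_{k=0}^{n_2} e^{it_3 k}\,\psi_n^{(k)}(t_1,t_2)\,P(K(\fs)=k),
\]
where $\psi_n^{(k)}(t_1,t_2)$ denotes the characteristic function of $\bigl((I_1-m_1)/\sqrt{n},(I_2-m_2)/\sqrt{n}\bigr)$ conditional on $K(\fs)=k$.

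By Theorem~\ref{thm.IdleDistribution}, for every fixed $k\ge 0$ the conditional characteristic function $\psi_n^{(k)}(t_1,t_2)$ converges to the characteristic function $\psi_\infty(t_1,t_2)$ of the bivariate Normal in~(\ref{eqn.clt}) (note that the limiting covariance parameters $f_1,f_2,\theta$ do \emph{not} depend on $k$, which is exactly what forces independence from $K$). By Theorem~\ref{thm.Kdist}, $P(K(\fs)=k)\to p_k:=\bigl(1-\tfrac{1-\beta}{\alpha}\bigr)\bigl(\tfrac{1-\beta}{\alpha}\bigr)^{k}$ for each $k$. Hence termwise,
\[
e^{it_3 k}\,\psi_n^{(k)}(t_1,t_2)\,P(K(\fs)=k)\;\longrightarrow\;e^{it_3 k}\,\psi_\infty(t_1,t_2)\,p_k.
\]

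To interchange the limit and the infinite sum I will use a dominated-convergence argument on $\ell^1(\mathbb{N})$. Each summand is bounded in modulus by $P(K(\fs)=k)$, and the total mass $\sum_k P(K(\fs)=k)=1$ converges to $\sum_k p_k=1$. By Scheff\'e's lemma (applied to the measures $P(K(\fs)=k)$ on $\mathbb{N}$, which converge setwise to the geometric limit with equal total mass), the convergence is in fact in total variation, which is exactly the uniformity needed to pass to the limit in the sum. The complete resource pooling assumption $\alpha+\beta>1$ guarantees $(1-\beta)/\alpha<1$, so the geometric limit is proper and Scheff\'e's lemma applies. Therefore
\[
\varphi_n(t_1,t_2,t_3)\;\longrightarrow\;\psi_\infty(t_1,t_2)\sum_{k=0}^\infty e^{it_3 k}p_k,
\]
which factors as the product of the characteristic function of the bivariate Normal in (\ref{eqn.clt}) and the characteristic function of the geometric distribution in (\ref{eqn.limK}). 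L\'evy's continuity theorem then gives joint convergence in distribution and, by the factored form of the limiting characteristic function, asymptotic independence of $K(\fs)$ from $\bigl((I_1(\fs)-m_1)/\sqrt{n},(I_2(\fs)-m_2)/\sqrt{n}\bigr)$.

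The main obstacle I anticipate is the justification of the tail control when exchanging the limit with the sum: a priori the index $k$ ranges up to $n_2$, which itself tends to infinity, so one has to rule out mass escaping to infinity in $k$. This is the step where Theorem~\ref{thm.Kdist} is used in a slightly stronger form than mere pointwise convergence, and where Scheff\'e's lemma (or, equivalently, a direct truncation argument: fix $M$ so that $\sum_{k\ge M}p_k<\varepsilon$, use pointwise convergence for $k<M$, and bound the $k\ge M$ tail uniformly in $n$ using $\sum_{k\ge M}P(K(\fs)=k)\to\sum_{k\ge M}p_k<\varepsilon$) is essential. Everything else is a routine application of L\'evy's continuity theorem to the factorised limit.
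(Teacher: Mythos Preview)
Your proposal is correct. The paper does not actually supply a proof of this theorem in the appendix; it contents itself with the remark, just before the statement, that since $K(\fs)$ converges in distribution to a proper geometric law (Theorem~\ref{thm.Kdist}), the conditional results of Theorems~\ref{thm.FluidLimit} and~\ref{thm.IdleDistribution} can be ``extended into unconditional versions.'' Your characteristic-function argument, with Scheff\'e's lemma (or the equivalent truncation) to control the $k$-tail when interchanging the limit and the sum, is precisely the standard way to make that extension rigorous, and it is faithful to what the paper leaves implicit. The one point worth stressing --- which you do identify --- is that Theorem~\ref{thm.Kdist} is used not just pointwise but to supply tightness of $K(\fs)$; once that is secured, the rest is routine.
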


Consider a special case when $\mu_1=\mu_2=\mu$, we have $\theta=\beta$. $m_1$ and $m_2$ can be easily solved:
\[
m_1 = (1-\rho)n_1, \quad m_2 = (1-\rho)n_2.
\]
When $n\to\infty$, $\left(\frac{I_1(\frak{s})-(1-\rho)n_1}{\sqrt{n}},\,\frac{I_2(\frak{s})-(1-\rho)n_2}{\sqrt{n}}\right)$ converges in distribution to a bivariate Normal distribution with mean $(0,0)$,  variance
\[
\left(\,\rho \theta(1-\rho(1-\theta)),\, \rho (1-\theta)(1-\rho\theta)\,\right),
\]
and correlation
\[
\frac{\rho\sqrt{\theta(1-\theta)}}{\sqrt{(1-\rho(1-\theta))(1-\rho\theta)}}.
\]
The total idleness has mean of $(1-\rho)n$ and variance of
\[
Var(I_1(\frak{s}))+Var(I_2(\frak{s}))+2 Cov(I_1(\frak{s}),I_2(\frak{s})) = \rho n.
\]

\subsection{Comparison to the bipartite FCFS infinite matching model}
In the infinite matching model corresponding to the N-System there is an infinite sequence of customers, of types $c_1,c_2$, where the customer types are i.i.d.,  type is  $c_1$ with probability $\alpha$ and $c_2$ with probability $1-\alpha$, and an independent sequence of servers, of types  $s_1,s_2$, where the server types are i.i.d.,  type is  $s_1$ with probability $\beta$ and $s_2$ with probability $1-\beta$, and compatibility graph with arcs $\{(c_1,s_1),(c_1,s_2),(c_2,s_1)\}$.  Successive customers and servers are matched according to FCFS:  each server is matched to the first compatible customer that was not matched to a previous server, and each customer is matched to the first compatible server that was not matched to a previous customer.  

After $n$ of the customers have been matched, consider the sequence of remaining servers.  Let $K_n$ be the number of servers of type $s_2$ that are first in this sequence, preceding the first server of type $s_1$.  The $(K_n)_{n=0}^\infty$ is a Markov chain.
The steady state distribution for this Markov chain is that $P(K_\infty = k) = \left(1-\frac{1-\beta}{\alpha}\right)\left(\frac{1-\beta}{\alpha}\right)^{k},\,k\ge 0$, which is exactly the limiting distribution of $K(\fs)$ in (\ref{thm.Kdist}).

\section{Numerical Examples}
\label{sec.numerics}

We test our results by investigating an N-system. $\lambda=100$, $n_1=n_2 = 100$, $\mu_1 = \mu_2 = 1$, $\rho=0.5$.
From our approximation results, as long as $\alpha+\theta>1$, or $\alpha>0.5$, both pools should have similar utilization. So the average number of idle servers in each pool is close to 50, with variance of
$\theta\rho(1-\rho+\theta\rho)n=(1-\theta)\rho(1-\rho+(1-\theta)\rho)n=37.5$.
We use exact stationary distribution to verify this.

Now we can calculate the expectation and variance of idle number in each pool exactly, listed in the following table.
\begin{table}[h]
\centering \caption{The exact calculation} \label{table.1}
\begin{tabular}{ccccc}
\toprule
$\alpha$ & $E[I_1]$ & $Var[I_1]$ & $E[I_2]$ & $Var[I_2]$ \\
\midrule
0.8 & 49.8383 & 37.7049 & 50.1617 & 37.3814 \\
0.7 & 49.6482 & 38.078  & 50.3518 & 37.3743 \\
0.6 & 49.1787 & 39.2148 & 50.8213 & 37.5722 \\
0.55 &48.6055 & 40.8706 & 51.3945 & 38.0816 \\
0.5 & 47.333  & 44.883  & 52.667  & 39.549  \\
0.4 & 39.981  & 59.821  & 60.019  & 39.7854 \\
\bottomrule
\end{tabular}
\end{table}
We can see that when $\alpha>0.5$, the approximation is very good.
When $\alpha<0.5$, the approximation does not work. In fact, when $\alpha+\theta<1$ and system is large, complete resource pooling disappears, and server pool 1 seldom serves type s1 customers.
The N-system operates like 2 separate queues: pool 1 serves type s2 and pool 2 serves type s1.
The utilization of pool 1 is $\frac{(1-\alpha)\lambda}{n_1}$ and the utilization of pool 2 is $\frac{\alpha\lambda}{n_2}$.
From previous results, the number of idles servers in pool 1 can be approximated by a Normal distribution with mean $n_1-(1-\alpha)\lambda=40$ and variance $(1-\alpha)\lambda=60$;
whereas the number of idles servers in pool 2 can be approximated by a Normal distribution with mean $n_2-\alpha\lambda=60$ and variance $\alpha\lambda=40$.

When $\alpha$ goes down to 0.5, the approximation is getting worse. Note that $E[K(\frak{s})]\approx \frac{\alpha}{\alpha+\theta-1}$ is large when $\alpha$ is close to 0.5, making it not negligible.
We have a better approximation for $\alpha$ close to 0.5. 
When $K(\frak{s})=k$, we use $\theta^{'}(k)=\frac{n_1-1}{n-1-k}$ instead of $\theta$. The approximated average number of busy servers in pool 1 is $\theta^{'}(k)\rho n$.
\[
E[X]=E[E[X|K(\frak{s})]]=E[\theta^{'}(K)(1-\rho)n]=\sum_{k=0}{n_2}\frac{n_1-1}{n-1-k}\left(\frac{1-\theta}{\alpha}\right)^k\frac{\alpha+\theta-1}{\alpha} \rho n
\]
To make it consistent, we need $\theta = \frac{E[X]}{\rho n}$. Solving this equation gives $\theta$, and $E[X]$. In this example, when $\alpha = 0.6$,
the solution to
\[
\sum_{k=0}{n_2}\frac{n_1-1}{n-1-k}\left(\frac{1-\theta}{\alpha}\right)^k\frac{\alpha+\theta-1}{\alpha} = \theta
\]
is $\theta = 0.5093$. $E[I_1]=100-\theta \rho n=49.07$, which is closer to the true value 49.18.
The following table lists the comparison of the improved approximation and the true value:
\begin{table}[h]
\centering \caption{The improved approximation} \label{table.1}
\begin{tabular}{cccccc}
\toprule
$\alpha$ & 0.8  & 0.7 & 0.6 & 0.55 & 0.5   \\
\midrule
$E[I_1]$ & 49.8383 & 49.6482 & 49.1787 & 48.6055  & 47.333 \\
Approx.  & 49.83   & 49.62   & 49.07   & 48.29 & 46.46 \\
\bottomrule
\end{tabular}
\end{table}
We can see that even when $\alpha =0.5$, the improved approximation is not bad.

\section*{Acknowledgment}

We are grateful to Ivo Adan for helpful discussion of this paper.

\appendix

\section{Appendix:  Proofs for Section \ref{sec.stationary}}

\subsection{Proof of Lemma \ref{thm.sum} and Theorem \ref{thm.piI1I2K} }
\begin{proof}[Proof of Lemma \ref{thm.sum}]
We prove this lemma by induction. Define the left-hand-side as $C_m$. Step 2:
\[
C_2 = \frac{1}{a_1(a_1+a_2)} + \frac{1}{a_2(a_1+a_2)} = \frac{a_1+a_2}{a_1 a_2(a_1+a_2)} = \frac{1}{a_1 a_2}
\]
Step $m$:
\begin{eqnarray*}
C_m &=& \sum_{(A_1,\ldots,A_m)\in\mathcal{P}(a_1,\ldots,a_m) } \prod_{l=1}^m \left( \sum_{j=1}^l  A_j \right)^{-1} \\
&=& \frac{1}{\sum_{l=1}^m a_l}  \sum_{p=1}^m
\sum_{(A_1,\ldots,A_{m-1})\in \mathcal{P}(a_j: j\ne p) } \prod_{l=1}^{m-1} \left( \sum_{j=1}^l  A_j \right)^{-1} \\
&=& \frac{1}{\sum_{l=1}^m a_l}  \sum_{p=1}^m     \left( \prod_{j \ne p}  a_j \right)^{-1}          \\
&=& \frac{1}{\sum_{l=1}^m a_l}  \frac{\sum_{p=1}^m a_p}{\prod_{j=1}^m  a_j}  \\
&=& \left( \prod_{l=1}^m  a_l \right)^{-1}
\end{eqnarray*}
\end{proof}

\begin{proof}[Proof of Theorm \ref{thm.piI1I2K}]
Summation over the geometric terms $q_j=0,\ldots,\infty$ in (\ref{eqn.pistate}) gives
\begin{equation*}
\sum_{q_1,\cdots,q_{n-i}} \pi(\frak{s}) = \left\{ \begin{array}{ll}
\displaystyle
B \prod_{l=1}^{n-i_1-i_2} \left( \sum_{j=1}^l  \mu(S_j) \right)^{-1} \left(\frac{1}{\lambda}\right)^{i_1+i_2-k} \left(\frac{1}{\lambda_1}\right)^{k}, &
\begin{array}{l}
k=0,\ldots,n_2,\; \\ i_1=0,\ldots,n_1\;\\ i_2=k,\ldots,n_2,
\end{array} \\
\\
\displaystyle
B \prod_{l=1}^{n-k-1} \left( \sum_{j=1}^l  \mu(S_j) \right)^{-1}\prod_{j=n-k}^{n-i_2} \frac{1}{\mu_1n_1+\mu_2(j-n_1)-\lambda_2}\;
\left(\frac{1}{\lambda_1}\right)^{i_2}, \quad &
\begin{array}{l}
k=1,\ldots,n_2,\; \\ i_1=0,\;\\ i_2=1,\ldots,k,
\end{array} \\
\\
\displaystyle
B \prod_{l=1}^{n-k-1} \left( \sum_{j=1}^l  \mu(S_j) \right)^{-1}\prod_{j=n-k}^{n-1} \frac{1}{\mu_1n_1+\mu_2(j-n_1)-\lambda_2}\;
\frac{1}{\mu_1n_1+\mu_2n_2-\lambda}, &
\begin{array}{l}
k=0,\ldots,n_2,\;\\ i_1=i_2=0.
\end{array}
\end{array}\right.
\end{equation*}
Next we see that in this expression, permutations of $S_1,\ldots,S_n$ with the same $(k,i_1,i_2)$ have a similar structure.   We now  sum over all the permutations of the appropriate $S_j,\,1\le j \le n-\max\{k+1,i_1+i_2\}$.  By  Lemma~\ref{thm.sum} we  obtain
\begin{equation}\label{eqn.pipermutation}
\left\{ \begin{array}{ll}
\displaystyle
B \mu_1^{i_1-n_1}\mu_2^{i_2-n_2} \left(\frac{1}{\lambda}\right)^{i_1+i_2-k} \left(\frac{1}{\lambda_1}\right)^{k}, &
\begin{array}{l}
k=0,\ldots,n_2,\;  \\
i_1=1,\ldots,n_1,\; \\ i_2=k,\ldots,n_2,
\end{array} \\
\\
\displaystyle
B \mu_1^{1-n_1}\mu_2^{k-n_2} \prod_{j=n-k}^{n-i_2} \frac{1}{\mu_1n_1+\mu_2(j-n_1)-\lambda_2}\;
\left(\frac{1}{\lambda_1}\right)^{i_2}, \quad &
\begin{array}{l}
k=1,\ldots,n_2,\; \\ i_1=0,\;\\ i_2=1,\ldots,k,
\end{array} \\
\\
\displaystyle
B \mu_1^{1-n_1}\mu_2^{k-n_2} \prod_{j=n-k}^{n-1} \frac{1}{\mu_1n_1+\mu_2(j-n_1)-\lambda_2}\;
\frac{1}{\mu_1n_1+\mu_2n_2-\lambda}, &
\begin{array}{l}
k=0,\ldots,n_2,\;\\ i_1=i_2=0.
\end{array}
\end{array}\right.
\end{equation}
Each permutation of the remaining servers, $S_j,\, n-\max\{k+1,i_1+i_2\} < j \le n$ has the same stationary probability.  It remains to count the number of permutations.
When $i_1=0$ we have $i_2 \le k$.  For each permutation we choose 1 type $s_1$ server and $k$ out of $n_2$ type $s_2$ servers to form the last $k+1$ servers. The number of permutations is
\[
n_1{n_2\choose k}k! = \frac{n_1\,n_2!}{(n_2-k)!}.
\]
When $i_1>0$, we have $i_2\geq k$. For each permutation, we choose $i_1$ out of $n_1$ type $s_1$ servers and $i_2$ out of $n_2$ type $s_2$ servers. We then choose 1 from the $i_1$ idle  servers of type $s_1$, and $k$ from the $i_2$ idle servers of type $s_2$ to obtain the last $k+1$ servers. The number of permutations is
\[
{n_1 \choose i_1}{n_2\choose i_2}i_1{i_2 \choose k}(i_1+i_2-k-1)!k! ={n_1\choose i_1}{n_2\choose i_2} \frac{i_1 i_2! (i_1+i_2-k-1)!}{(i_2-k)!}.
\]
Multiplying the terms in (\ref{eqn.pipermutation}) by the appropriate number of permutations and defining $B_1=B\mu_1^{-n_1}\mu_2^{-n_2}$ gives (\ref{eqn.piKI}).
\end{proof}

\subsection{Proofs of Theorems \ref{prop.ZeroQueue}, \ref{thm.FluidLimit} and \ref{thm.IdleDistribution}}

\begin{proof}[Proof of Theorem \ref{prop.ZeroQueue}]
We prove the theorem in three steps:
\begin{description}
\item[(i)]
We show that
\[
P\left(I_1(\frak{s})=0\right) \sim B_1 \frac{1}{1-\delta} \times \left\{ \begin{array}{ll}
\sqrt{2\pi n_2}\exp\left(n_2\left(-\log\,\kappa+\kappa-1\right)\right)  & 0<  \kappa < 1 \\
\sqrt{2\pi n_2}/2 &  \kappa =1  \\
B_1 \frac{1}{1-\delta}\left(\frac{1-(1-\alpha)\rho}{1-\rho}+\frac{1}{\kappa-1}\right)   &  \kappa > 1
\end{array}   \right.
\]
where $\kappa:=\frac{\lambda_1}{\mu_2n_2}$. Note that  by $\alpha + \beta >1$ we have $\kappa > \frac{1}{1 + \mu_2 T}$. Note also that $-\log\,\kappa+\kappa-1\geq 0$.
\item[(ii)]
We show that
\begin{eqnarray*}
&&P(I_1(\frak{s})=\lceil m_1\rceil, I_2(\frak{s})=\lceil m_2\rceil,K(\frak{s})=0)  \\
&&\sim B_1\left(\frac{2\pi\beta n_1n_2}{(n_1-m_1)(n_2-m_2)m_2}\right)^{1/2}
\exp\left[-n_1 \left(\log\left(1-\frac{m_1}{n_1}\right)+\frac{m_1}{n_1} \right)\right]  \\
&& \qquad \exp\left[-n_2 \left(\log\left(1-\frac{m_2}{n_2}\right)+\frac{m_2}{n_2} \right)\right]
\end{eqnarray*}
where $m_1$ and $m_2$ are defined in (\ref{eqn.MeanIdle}).
\item[(iii)]
We show that as $n\to\infty$
\[
\frac{P\left(I_1(\frak{s})=0\right) }
{P(I_1(\frak{s})=\lceil m_1\rceil, I_2(\frak{s})=\lceil m_2\rceil,K(\frak{s})=0)}  \to 0
\]
which proves the proposition.
\end{description}
The details of the proofs of these three steps are as follows:

\textbf{Proof of (i):}

First we calculate
\begin{eqnarray*}
&& P(I_1(\frak{s}) = 0,\,I_2(\frak{s})=0)=\sum_{k=0}^{n_2}\pi(k,0,0) \\
&&  \qquad  = \sum_{k=0}^{n_2}B_1 \frac{n_1\,n_2!}{(n_2-k)!}\mu_1\mu_2^{k} \prod_{j=n-k}^{n-1} \frac{1}{\mu_1n_1+\mu_2(j-n_1)-\lambda_2}\;\frac{1}{\mu_1n_1+\mu_2n_2-\lambda}.
\end{eqnarray*}
We use induction to calculate
\[
U_m:=\sum_{k=m}^{n_2}\frac{\mu_2^k}{(n_2-k)!} \prod_{j=n-k}^{n-1} \frac{1}{\mu_1n_1+\mu_2(j-n_1)-\lambda_2}
\]
from $m=n_2$ to $m=1$.
When $m=n_2$,
\[
U_{n_2}=\frac{\mu_2^{n_2}}{(n_2-n_2)!}\frac{1}{\mu_1n_1-\lambda_2} \prod_{j=n-n_2+1}^{n-1} \frac{1}{\mu_1n_1+\mu_2(j-n_1)-\lambda_2}.
\]
Suppose
\[
U_{m+1} = \frac{\mu_2^{m+1}}{(n_2-m-1)!}\frac{1}{\mu_1 n_1-\lambda_2}\prod_{j=n-m}^{n-1} \frac{1}{\mu_1n_1+\mu_2(j-n_1)-\lambda_2}
\]
then
\begin{eqnarray*}
&U_m&=  \frac{\mu_2^{m+1}}{(n_2-m-1)!} \frac{1}{\mu_1n_1-\lambda_2}\prod_{j=n-m}^{n-1}\frac{1}{\mu_1n_1+\mu_2(j-n_1)-\lambda_2} \\
&&\qquad + \frac{\mu_2^{m}}{(n_2-m)!}\prod_{j=n-m}^{n-1} \frac{1}{\mu_1n_1+\mu_2(j-n_1)-\lambda_2}\\
&&= \frac{\mu_2^{m}}{(n_2-m)!}\prod_{j=n-m}^{n-1} \frac{1}{\mu_1n_1+\mu_2(j-n_1)-\lambda_2}\left(\frac{\mu_2(n_2-m)}{\mu_1 n_1-\lambda_2}+1\right)\\
&&= \frac{\mu_2^{m}}{(n_2-m)!}\frac{1}{\mu_1 n_1-\lambda_2}\prod_{j=n+1-m}^{n-1}\frac{1}{\mu_1n_1+\mu_2(j-n_1)-\lambda_2}.
\end{eqnarray*}
Therefore, the induction is valid and we have
\[
U_1=\frac{\mu_2}{(n_2-1)!} \frac{1}{\mu_1 n_1-\lambda_2}.
\]
\begin{eqnarray*}
&P(I_1(\frak{s})=0,\,I_2(\frak{s})=0)&= U_1 B_1n_1n_2!\frac{\mu_1}{\mu_1n_1+\mu_2n_2-\lambda}+\pi(0,0,0)\\
&&=B_1  \frac{\mu_2 n_2}{\mu_1 n_1-\lambda_2}\;\frac{\mu_1n_1}{\mu_1n_1+\mu_2n_2-\lambda}+B_1\frac{\mu_1 n_1}{\mu_1n_1+\mu_2n_2-\lambda} \\
&&=B_1 \frac{\mu_1n_1}{\mu_1 n_1-\lambda_2}\;\frac{\mu_1n_1+\mu_2n_2-\lambda_2}{\mu_1n_1+\mu_2n_2-\lambda}\\
&&=B_1 \frac{1}{1-\delta} \;\frac{1-(1-\alpha)\rho}{1-\rho}.
\end{eqnarray*}

Next we calculate
\[
P(I_1(\frak{s})=0,\,I_2(\frak{s})>0)=\sum_{k=1}^{n_2}\sum_{i_2=1}^{k}\pi(k,0,i_2)=\sum_{i_2=1}^{n_2}\sum_{k=i_2}^{n_2}\pi(k,0,i_2).
\]
Similar to the induction calculating $U_m$ above,  we can obtain
\begin{eqnarray*}
&\sum_{k=i_2}^{n_2}\pi(k,0,i_2)&=B_1\left(\frac{1}{\lambda_1}\right)^{i_2}n_1 \mu_1 n_2!\sum_{k=i_2}^{n_2}\frac{\mu_2^k}{(n_2-k)!} \prod_{j=n-k}^{n-i_2} \frac{1}{\mu_1n_1+\mu_2(j-n_1)-\lambda_2} \\
&& = B_1 \left(\frac{1}{\lambda_1}\right)^{i_2} n_1\mu_1\,n_2! \frac{\mu_2^{i_2}}{(n_2-i_2)!}\frac{1}{\mu_1 n_1-\lambda_2}
\\&& = B_1 \frac{1}{1-\delta}  \left( \frac{\mu_2}{\lambda_1}\right )^{i_2} \frac{n_2!}{(n_2-i_2)!}
\end{eqnarray*}
Therefore,
\begin{eqnarray*}
&P(I_1(\frak{s})=0,\,I_2(\frak{s})>0) &= B_1\frac{1}{1-\delta}n_2!\sum_{i_2=1}^{n_2}\left(\frac{\lambda_1}{\mu_2}\right)^{-i_2} \frac{1}{(n_2-i_2)!}\\
&&= B_1 \frac{1}{1-\delta}n_2!\left(\frac{\lambda_1}{\mu_2}\right)^{-n_2} \sum_{i_1^{'}=0}^{n_2-1}\left(\frac{\lambda_1}{\mu_2}\right)^{i_2^{'}} \frac{1}{i_2^{'}!} \\
&&= B_1 \frac{1}{1-\delta} n_2!  \left(\frac{\mu_2}{\lambda_1}\right)^{n_2}
\exp\left(\frac{\lambda_1}{\mu_2} \right) P(X<n_2) \\
&& = B_1 \frac{1}{1-\delta} \frac{P(X<n_2)}{P(X=n_2)},
\end{eqnarray*}
where $X$ is a Poisson random variable with parameter $\frac{\lambda_1}{\mu_2}$.
Using Stirling's approximation,
\begin{eqnarray*}
&P(X=n_2) & = \frac{1}{n_2!}\left(\frac{\lambda_1}{\mu_2}\right)^{n_2}\exp\left(-\frac{\lambda_1}{\mu_2}\right) \\
&&\sim \frac{1}{\sqrt{2\pi n_2}} \left(\frac{\lambda_1}{\mu_2n_2}\right)^{n_2}\exp\left(n_2-\frac{\lambda_1}{\mu_2}\right)\\
&&=\frac{1}{\sqrt{2\pi n_2}} \exp\left(n_2\left(\log\left(\frac{\lambda_1}{\mu_2n_2}\right)+1-\frac{\lambda_1}{\mu_2n_2}\right)\right)\\
&&=\frac{1}{\sqrt{2\pi n_2}}\exp\left(n_2\left(\log\kappa +1 -\kappa\right)\right),
\end{eqnarray*}
Recall that $\kappa=\frac{\lambda_1}{\mu_2n_2}$ and note that $\log\kappa+1-\kappa\leq 0$. Note also that when $n\to\infty$, $X$ can be approximated by a Normal distribution with mean $\frac{\lambda_1}{\mu_2}$ and variance $\frac{\lambda_1}{\mu_2}$. Next we analyze $\frac{P(X<n_2)}{P(X=n_2)}$ in 3 cases depending on $\kappa$.
\begin{itemize}
\item When $0<\kappa<1$, from the Normal distribution approximation, when $n\to \infty$, $P(X<n_2)\to 1$.
Therefore,
\[
P\left(I_1(\frak{s})=0,\,I_2(\frak{s})>0\right)\sim B_1 \frac{1}{1-\delta}
\left(\sqrt{2\pi n_2}\exp\left(-n_2\left(\log\kappa+1-\kappa\right)\right)\right).
\]
\item When $\kappa=1$, $-\log\kappa+\kappa-1=0$. When $n\to \infty$, the Normal distribution approximation gives $P(X<n_2)\to \frac{1}{2}$.
\[
P\left(I_1(\frak{s})=0,\,I_2(\frak{s})>0\right)\sim B_1 \frac{1}{1-\delta}\frac{1}{2}\sqrt{2\pi n_2}.
\]
\item When $\kappa> 1$, when $n\to\infty$, the Normal distribution approximation gives $P(X<n_2)\to 0$. We need more care to treat this case. For any $1\leq j\leq n_2$,
\[
\frac{P(X=n_2-j)}{P(X=n_2)} = \frac{\left(\frac{\lambda_1}{\mu_2}\right)^{n_2-j}\frac{1}{(n_2-j)!}}{\left(\frac{\lambda_1}{\mu_2}\right)^{n_2}\frac{1}{n_2!}}
=\frac{n_2!}{\kappa^j n_2^j (n_2-j)!} < \frac{1}{\kappa^j}.
\]
Therefore,
\[
\frac{P(X<n_2)}{P(X=n_2)}\leq \sum_{j=1}^{n_2}\frac{1}{\kappa^j}<\frac{1}{\kappa-1}.
\]
In fact, for any fixed $j$, when $n\to\infty$,
\[
\frac{P(X=n_2-j)}{P(X=n_2)} \to \frac{1}{\kappa^j}.
\]
For any $\epsilon>0$, let $J:=\lceil\frac{-\log\epsilon}{\log\kappa}\rceil$. We have $\epsilon\geq \kappa^{-J}$.
There exists an $N$ such that when $n>N$, for any $1\leq j\leq J$,
\[
\frac{P(X=n_2-j)}{P(X=n_2)} - \frac{1}{\kappa^j}>-\frac{\epsilon}{J}.
\]
Therefore,
\[
\frac{P(X<n_2)}{P(X=n_2)} > \sum_{j=1}^{J}\frac{1}{\kappa^j}-\epsilon = \frac{1-\kappa^{-J}}{\kappa-1}-\epsilon\geq\frac{1}{\kappa-1}-\frac{\kappa \epsilon}{\kappa-1}.
\]
Therefore, when $n\to\infty$,
\[
\frac{P(X<n_2)}{P(X=n_2)} \to \frac{1}{\kappa-1}
\]
We have
\begin{equation*}
P\left(I_1(\frak{s})=0,\, I_2(\frak{s})>0\right) \sim B_1 \frac{1}{1-\delta}\frac{1}{\kappa-1}.
\end{equation*}
\end{itemize}

In summary, when $\kappa\leq 1$, $P\left(I_1(\frak{s})=0,\,I_2(\frak{s})=0\right)$ is negligible compared with $P\left(I_1(\frak{s})=0,\,I_2(\frak{s})>0\right)$ when $n\to\infty$. We have
\[
P\left(I_1(\frak{s})=0\right) \sim B_1 \frac{1}{1-\delta} \times \left\{ \begin{array}{ll}
\sqrt{2\pi n_2}\exp\left(n_2\left(-\log\,\kappa+\kappa-1\right)\right)  & 0<  \kappa < 1 \\
\sqrt{2\pi n_2}/2 &  \kappa =1  \\\frac{1-(1-\alpha)\rho}{1-\rho}+\frac{1}{\kappa-1}& \kappa > 1
\end{array}   \right.
\]

\textbf{Proof of (ii):}

From equation (\ref{eqn.piKI}) we have
\begin{eqnarray*}
&&  P(I_1(\frak{s})=\lceil m_1\rceil, I_2(\frak{s})=\lceil m_2\rceil,K(\frak{s})=0) \\
&&  = B_1{n_1\choose \lceil m_1\rceil}{n_2\choose \lceil m_2\rceil} \lceil m_1\rceil (\lceil m_1\rceil+\lceil m_2\rceil-1)!\mu_1^{\lceil m_1\rceil}\mu_2^{\lceil m_2\rceil} \left(\frac{1}{\lambda}\right)^{\lceil m_1\rceil+\lceil m_2\rceil} \\
&& > \frac{B_1}{ m_1^2 m_2 (m_1+m_2)^2 \mu_1 \mu_2 } {n_1\choose m_1}{n_2\choose  m_2}  m_1 ( m_1+ m_2-1)!\mu_1^{ m_1}\mu_2^{ m_2} \left(\frac{1}{\lambda}\right)^{ m_1+ m_2} \\
&& > \frac{B_1}{ n^5 \mu_1 \mu_2 } {n_1\choose m_1}{n_2\choose  m_2}  m_1 ( m_1+ m_2-1)!\mu_1^{ m_1}\mu_2^{ m_2} \left(\frac{1}{\lambda}\right)^{ m_1+ m_2} \\
&&\sim \frac{B_1}{ n^5 \mu_1 \mu_2 }\frac{m_1}{m_1+m_2} \frac{n_1!n_2!}{(n_1-m_1)!m_1!(n_2-m_2)!m_2!} (m_1+m_2)!\mu_1^{m_1}\mu_2^{m_2}\lambda^{-m_1-m_2} \\
&&\sim \frac{B_1}{ n^5 \mu_1 \mu_2 }\left(\frac{2\pi m_1n_1n_2}{(m_1+m_2)(n_1-m_1)(n_2-m_2)m_2}\right)^{1/2} \frac{n_1^{n_1}n_2^{n_2}}{(n_1-m_1)^{n_1-m_1}m_1^{m_1}(n_2-m_2)^{n_2-m_2}m_2^{m_2}} \\
&&\qquad \times \left(\frac{m_1+m_2}{e}\right)^{m_1+m_2} \left(\frac{\mu_1}{\lambda}\right)^{m_1}\left(\frac{\mu_2}{\lambda}\right)^{m_2} \\
&&= \frac{B_1}{ n^5 \mu_1 \mu_2 }\left(\frac{2\pi m_1n_1n_2}{(m_1+m_2)(n_1-m_1)(n_2-m_2)m_2}\right)^{1/2} \\
&& \qquad \times \left(\frac{n_1}{n_1-m_1}\right)^{n_1}\left(\frac{n_2}{n_2-m_2}\right)^{n_2}
\left(\frac{m_1+m_2}{m_1}\right)^{m_1}\left(\frac{m_1+m_2}{m_2}\right)^{m_2} \\
&&\qquad \times \exp(-m_1-m_2)
\left(\frac{\mu_1(n_1-m_1)}{\lambda}\right)^{m_1}\left(\frac{\mu_2(n_2-m_2)}{\lambda}\right)^{m_2} \\
&&=\frac{B_1}{ n^5 \mu_1 \mu_2 }\left(\frac{2\pi\beta n_1n_2}{(n_1-m_1)(n_2-m_2)m_2}\right)^{1/2} \left(\frac{n_1}{n_1-m_1}\right)^{n_1}\left(\frac{n_2}{n_2-m_2}\right)^{n_2}\exp(-m_1-m_2)\\
&&=\frac{B_1}{ n^5 \mu_1 \mu_2 }\left(\frac{2\pi\beta n_1n_2}{(n_1-m_1)(n_2-m_2)m_2}\right)^{1/2} \exp\left(-n_1\left(\log\left(1-\frac{m_1}{n_1}\right)+\frac{m_1}{n_1}\right)\right) \\
&&\qquad \times  \exp\left(-n_2\left(\log\left(1-\frac{m_2}{n_2}\right)+\frac{m_2}{n_2}\right)\right).
\end{eqnarray*}
The second equality is due to $\frac{m_1}{m_1+m_2}=\beta$, $\frac{m_2}{m_1+m_2}=1-\beta$, $\frac{\mu_1(n_1-m_1)}{\lambda}=\beta$, $\frac{\mu_2(n_2-m_2)}{\lambda}=1-\beta$.

\textbf{Proof of (iii):}

Since  $\log(1-x)+x < 0$ when $0<x<1$, we have
\[
\log\left(1-\frac{m_1}{n_1}\right)+\frac{m_1}{n_1}<0 \mbox{ and } \log\left(1-\frac{m_2}{n_2}\right)+\frac{m_2}{n_2}<0.
\]
When $n\to\infty$, note that $\left(\frac{2\pi\beta n_1n_2}{(n_1-m_1)(n_2-m_2)m_2}\right)^{1/2}$ is of the order of $n^{-1/2}$. Therefore, $P(I_1(\frak{s})=\lceil m_1\rceil, I_2(\frak{s})=\lceil m_2\rceil,K(\frak{s})=0)/B_1$ increases exponentially. When $\kappa>1$, $P(I_1(\frak{s})=0)/B_1$ converges to a constant; when $\kappa=1$, $P(I_1(\frak{s})=0)/B_1$ increases in the order of $\sqrt{n}$.
Therefore, when $n\to\infty$ and $\kappa\geq 1$,
\[
\frac{P\left(I_1(\frak{s})=0\right)}{P(I_1(\frak{s})=\lceil m_1\rceil, I_2(\frak{s})=\lceil m_2\rceil,K(\frak{s})=0)}\to 0.
\]
When $\kappa<1$,
\begin{eqnarray*}
&&\frac{P\left(I_1(\frak{s})=0\right)}{P(I_1(\frak{s})=\lceil m_1\rceil, I_2(\frak{s})=\lceil m_2\rceil,K(\frak{s})=0)}
\sim \left(\frac{(n_1-m_1)(n_2-m_2)m_2}{\beta n_1(1-\delta)^2}\right)^{1/2}  \\
&&\qquad \times \exp\left(n_1\left(\log\left(1-\frac{m_1}{n_1}\right)+\frac{m_1}{n_1}\right)\right)
\exp\left(n_2\left(\log\left(1-\frac{m_2}{n_2}\right)+\frac{m_2}{n_2}-\log\kappa+\kappa-1\right)\right).
\end{eqnarray*}
We have that
\begin{eqnarray*}
&\log\left(1-\frac{m_2}{n_2}\right)+\frac{m_2}{n_2}-\log\kappa+\kappa-1
&=\log\left(\frac{n_2-m_2}{n_2}\frac{\mu_2n_2}{\lambda_1}\right)+\frac{\lambda_1-(n_2-m_2)\mu_2}{n_2\mu_2} \\
&&=\log\left(\frac{(n_2-m_2)\mu_2}{\lambda_1}\right)+\frac{\lambda_1-(n_2-m_2)\mu_2}{\lambda_1}\kappa \\
&&<\log\left(\frac{1-\beta}{\alpha}\right)+\frac{\alpha-(1-\beta)}{\alpha} \\
&&=\log\left(1-\frac{\alpha+\beta-1}{\alpha}\right)+\frac{\alpha+\beta-1}{\alpha} < 0
\end{eqnarray*}
Therefore, when $n\to\infty$,
\[
\frac{P\left(I_1(\frak{s})=0\right)}{P(I_1(\frak{s})=\lceil m_1\rceil, I_2(\frak{s})=\lceil m_2\rceil,K(\frak{s})=0)}\to 0.
\]
This completes the proof that when $n\to\infty$,
\[
P(I_1(\frak{s})=0)\to 0
\]
\end{proof}

\begin{proof}[Proof of Theorem \ref{thm.IdleDistribution}]
First we show that the weak convergence is valid given $K(\frak{s})=0$. Then we show that the same holds when $K(\frak{s})=k$, for any fixed $k$.
When $K(\frak{s})=0$, we prove the  convergence in probability in 2 steps.
\begin{itemize}
\item [(i)] We show that for all states $|I_1(\frak{s})-m_1|\geq \epsilon n \mbox{ or } |I_2(\frak{s})-m_2|\geq \epsilon n$, the conditional probability is dominated by a bounded constant multiple of the conditional probability of some point on the boundary of the rectangle $|I_1(\frak{s})-m_1| \leq \epsilon n$ $\times$ $|I_2(\frak{s})-m_2|\leq \epsilon n$.
\item [(ii)] When $n\to\infty$, we
approximate the conditional probability of the points in the rectangle $|I_1(\frak{s})-m_1| \leq \epsilon n$ $\times$ $|I_2(\frak{s})-m_2|\leq \epsilon n$. We then show that the probability of points on the boundary is negligible compared with the conditional probability at $(\lceil m_1\rceil,\lceil m_2\rceil)$.
\end{itemize}
\textbf{Proof of (i):}

\begin{eqnarray*}
&&P(I_1(\frak{s})=i_1, I_2(\frak{s})=i_2|K(\frak{s})=0) \\
&& = B_2{n_1\choose i_1}{n_2\choose i_2}i_1 (i_1+i_2-1)! \mu_1^{i_1}\mu_2^{i_2}\lambda^{-i_1-i_2} \\
&& = B_2 \frac{n_1!\,n_2!}{(n_1-i_1)!(i_1-1)!(n_2-i_2)!i_2!} (i_1+i_2-1)!\left(\frac{\mu_1}{\lambda}\right)^{i_1}\left(\frac{\mu_2}{\lambda}\right)^{i_2},
\end{eqnarray*}
where $B_2=B_1/P(K(\frak{s})=0)$.
\[
\frac{P(I_1(\frak{s})=i_1+1, I_2(\frak{s})=i_2|K(\frak{s})=0)}{P(I_1(\frak{s})=i_1, I_2(\frak{s})=i_2|K(\frak{s})=0)}
=\frac{(i_1+i_2)(n_1-i_1)\mu_1}{i_1\lambda}=\beta\frac{n_1-i_1}{n_1-m_1}\frac{i_1+i_2}{i_1}.
\]
\[
\frac{P(I_1(\frak{s})=i_1, I_2(\frak{s})=i_2+1|K(\frak{s})=0)}{P(I_1(\frak{s})=i_1, I_2(\frak{s})=i_2|K(\frak{s})=0)}
=\frac{(i_1+i_2)(n_2-i_2)\mu_2}{(i_2+1)\lambda}=(1-\beta)\frac{n_2-i_2}{n_2-m_2}\frac{i_1+i_2}{i_2+1}.
\]
We look at several cases:
\begin{itemize}
\item When $i_1\leq m_1$ and $(1-\beta)i_1<\beta i_2$, we have $\frac{i_1+i_2}{i_1}>\frac{1}{\beta}$. Therefore, $\frac{P(I_1(\frak{s})=i_1+1, I_2(\frak{s})=i_2|K(\frak{s})=0)} {P(I_1(\frak{s})=i_1,I_2(\frak{s})=i_2|K(\frak{s})=0)}>1$;
\item when $i_2\leq m_2$ and $(1-\beta)i_1>\beta i_2+1$, we have $\frac{i_1+i_2}{i_2+1} > \frac{1}{1-\beta}$. Therefore, $\frac{P(I_1(\frak{s})=i_1, I_2(\frak{s})=i_2+1|K(\frak{s})=0)} {P(I_1(\frak{s})=i_1,I_2(\frak{s})=i_2|K(\frak{s})=0)}>1$;
\item when $i_1> m_1$, $i_2>m_2$ and $(1-\beta)i_1\geq \beta i_2$, we have $\frac{i_1+i_2}{i_1}\leq \frac{1}{\beta}$. Therefore, $\frac{P(I_1(\frak{s})=i_1+1, I_2(\frak{s})=i_2|K(\frak{s})=0)}{P(I_1(\frak{s})=i_1, I_2(\frak{s})=i_2|K(\frak{s})=0)}<1$;
\item when $i_1> m_1$, $i_2>m_2$ and $(1-\beta)i_1\leq \beta i_2+1$, we have $\frac{i_1+i_2}{i_2+1}\leq \frac{1}{1-\beta}$. Therefore, $\frac{P(I_1(\frak{s})=i_1, I_2(\frak{s})=i_2+1|K(\frak{s})=0)}{P(I_1(\frak{s})=i_1, I_2(\frak{s})=i_2|K(\frak{s})=0)}<1$;
\item when $\beta i_2\leq (1-\beta)i_1\leq \beta i_2+1$, $i_1\leq m_1-\epsilon n$ and $i_2 \leq m_2-\epsilon n$. As long as $\frac{n_2-i_2}{n_2-m_2}\frac{i_2}{i_2+1}>1$, we have $\frac{P(I_1(\frak{s})=i_1, I_2(\frak{s})=i_2+1|K(\frak{s})=0)}{P(I_1(\frak{s})=i_1, I_2(\frak{s})=i_2|K(\frak{s})=0)}>1$. When $n$ is large, this requires
    \[ i_2> i_2^*:=\frac{1-\theta-f_2}{f_2}. \]
    As long as $\frac{n_1-i_1}{n_1-m_1}\frac{i_1-1}{i_1}>1$, we have $\frac{P(I_1(\frak{s})=i_1+1, I_2(\frak{s})=i_2|K(\frak{s})=0)}{P(I_1(\frak{s})=i_1, I_2(\frak{s})=i_2|K(\frak{s})=0)}>1$. When $n$ is large, this requires
    \[  i_1> i_1^*:=\frac{\theta}{f_1}. \]
\end{itemize}
For all $i_1>i_1^*$ or $i_2>i_2^*$, we can move the state to a neighbour state with larger steady state probability, shown as Figure~\ref{fig.Dominance}.
\begin{figure}[htb]
  \begin{center}
  \includegraphics[width=0.60\linewidth]{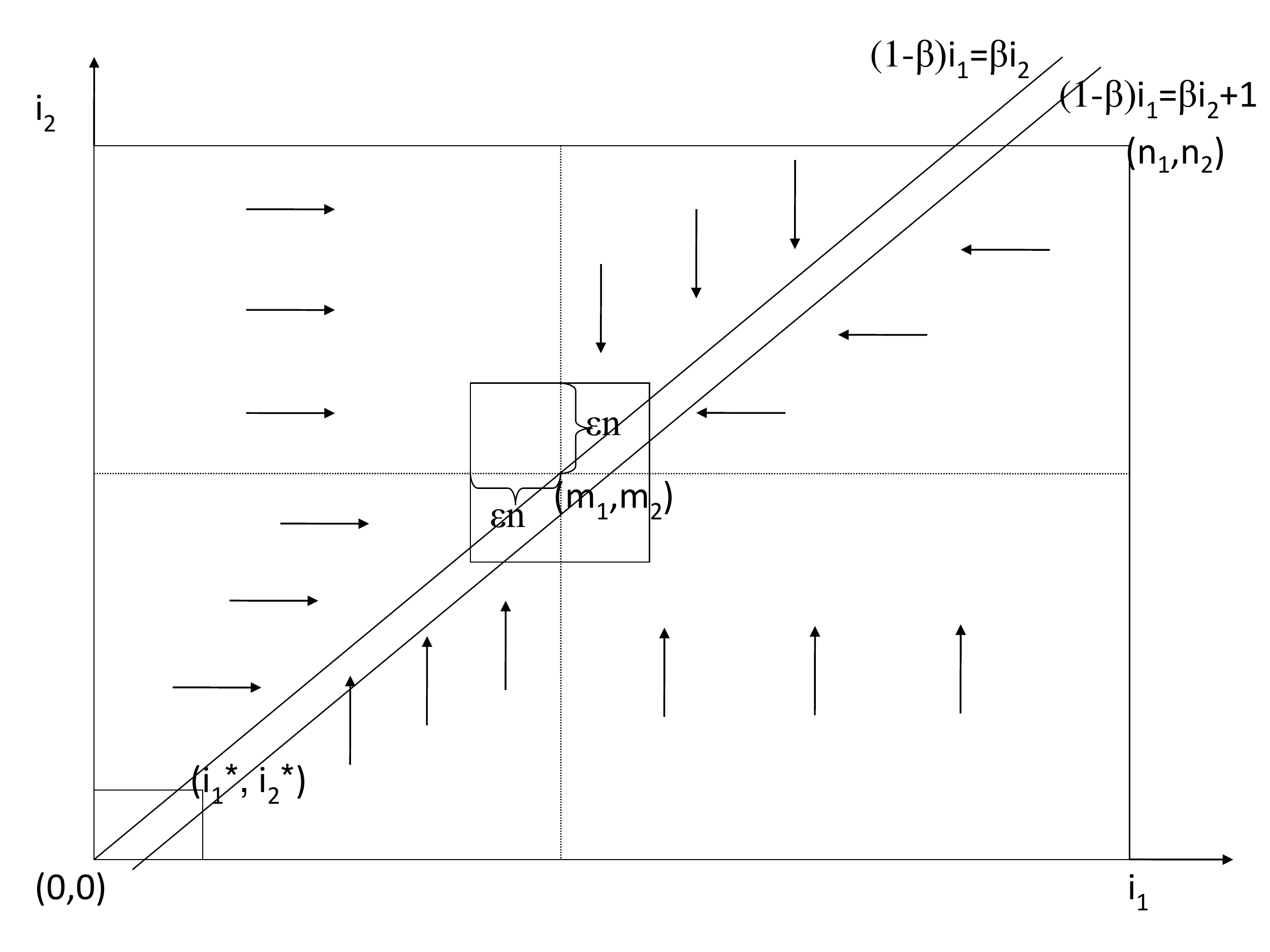}
  \end{center}
  \caption{The dominance of steady steady probability}
  \label{fig.Dominance}
\end{figure}

Eventually the movement stops at the boundary which are $\epsilon n$ away from $(m_1,m_2)$. Therefore, the probability of any state $(i_1,i_2)$ satisfying $i_1>i_1^*$ or $i_2>i_2^*$ would be dominated by the probability of some point at the boundary.

For any $(i_1,i_2)$ satisfying $i_1\leq i_1^*$ and $i_2\leq i_2^*$, since
\[
\frac{P(I_1(\frak{s})=i_1+1, I_2(\frak{s})=i_2|K(\frak{s})=0)}{P(I_1(\frak{s})=i_1, I_2(\frak{s})=i_2|K(\frak{s})=0)}
>\beta \mbox{ and }
\frac{P(I_1(\frak{s})=i_1, I_2(\frak{s})=i_2+1|K(\frak{s})=0)}{P(I_1(\frak{s})=i_1, I_2(\frak{s})=i_2|K(\frak{s})=0)}
>1-\beta,
\]
We have
\[
P(I_1(\frak{s})=i_1, I_2(\frak{s})=i_2|K(\frak{s})=0)< \frac{1}{\beta^{i_1^*+1}(1-\beta)^{i_2^*+1}}P(I_1(\frak{s})=i_1^*+1, I_2(\frak{s})=i_2^*+1|K(\frak{s})=0)
\]
and $P(I_1(\frak{s})=i_1^*+1, I_2(\frak{s})=i_2^*+1|K(\frak{s})=0)$ is dominated by the probability of some point at the boundary.

\textbf{Proof of (ii):}

When $i_1\in \left[m_1-\epsilon n,m_1+\epsilon n\right]$ and $i_2\in\left[m_2-\epsilon n,m_2+\epsilon n\right]$, and $n$ grows large, we can use Stirling's approximation.
\begin{eqnarray*}
&&P(I_1(\frak{s})=i_1, I_2(\frak{s})=i_2|K(\frak{s})=0) \\
&& = B_2{n_1\choose i_1}{n_2\choose i_2}i_1 (i_1+i_2-1)! \mu_1^{i_1}\mu_2^{i_2}\lambda^{-i_1-i_2} \\
&& = B_2 \frac{i_1}{i_1+i_2} \frac{n_1!n_2!}{(n_1-i_1)!i_1!(n_2-i_2)!i_2!} (i_1+i_2)!\left(\frac{\mu_1}{\lambda}\right)^{i_1}\left(\frac{\mu_2}{\lambda}\right)^{i_2} \\
&& \sim B_3\left(\frac{i_1}{(i_1+i_2)(n_1-i_1)(n_2-i_2)i_2}\right)^{1/2} \frac{(i_1+i_2)^{i_1+i_2}\exp(-i_1-i_2)}
{(n_1-i_1)^{n_1-i_1}i_1^{i_1}(n_2-i_2)^{n_2-i_2}i_2^{i_2}}
\left(\frac{\mu_1}{\lambda}\right)^{i_1}\left(\frac{\mu_2}{\lambda}\right)^{i_2} \\
&& = B_3\left(\frac{i_1}{(i_1+i_2)(n_1-i_1)(n_2-i_2)i_2}\right)^{1/2} \exp\big((i_1+i_2)\log(i_1+i_2)-(n_1-i_1)\log(n_1-i_1)-i_1\log(i_1) \\
&&\qquad -(n_2-i_2)\log(n_2-i_2)-i_2\log(i_2)+i_1\log\left(\frac{\mu_1}{\lambda}\right)+i_2\log\left(\frac{\mu_2}{\lambda}\right)-i_1-i_2\big)
\end{eqnarray*}
where $B_3=B_2n_1!n_2!(2\pi)^{-\frac{3}{2}}e^{n}$.
Note that $\frac{(n_1-m_1)\mu_1}{\lambda}=\beta$ and $\frac{(n_2-m_2)\mu_2}{\lambda}=1-\beta$. Define $x_1:=\frac{i_1}{n}$ and $x_2:=\frac{i_2}{n}$, we have $x_1\in[f_1-\epsilon,f_1+\epsilon]$ and $x_2\in[f_2-\epsilon,f_2+\epsilon]$.
\begin{eqnarray*}
&&(i_1+i_2)\log(i_1+i_2)-(n_1-i_1)\log(n_1-i_1)-i_1\log(i_1)-(n_2-i_2)\log(n_2-i_2) -i_2\log(i_2)  \\
&&\qquad +i_1\log\left(\frac{\mu_1}{\lambda}\right)+i_2\log\left(\frac{\mu_2}{\lambda}\right)-i_1-i_2 \\
&&=(i_1+i_2)\log(i_1+i_2)-(n_1-i_1)\log(n_1-i_1)-(n_2-i_2)\log(n_2-i_2) \\
&&\qquad +i_1\log\left(\frac{\beta}{(n_1-m_1)i_1}\right)+i_2\log\left(\frac{1-\beta}{(n_2-m_2)i_2}\right)-i_1-i_2\\
&&=(i_1+i_2)\log(x_1+x_2)+(i_1+i_2)\log n-(n_1-i_1)\log(\theta-x_1)-(n_1-i_1)\log n-(n_2-i_2)\log(1-\theta-x_2)\\
&&\qquad -(n_2-i_2)\log n +i_1\log\left(\frac{\beta}{(\theta-f_1)x_1}\right)-2i_1\log n +i_2\log\left(\frac{1-\beta}{(1-\theta-f_2)x_2}\right)-2i_2\log n -i_1-i_2\\
&&= n((x_1+x_2)\log(x_1+x_2)-(\theta-x_1)\log(\theta-x_1)-(1-\theta-x_2)\log(1-\theta-x_2)\\
&&\qquad +x_1(\log\beta-\log(\theta-f_1)-\log(x_1))+ x_2(\log(1-\beta)-\log(1-\theta-f_2)-\log(x_2))-\log n-x_1-x_2)
\end{eqnarray*}
We define
\begin{eqnarray*}
&&F(x_1,x_2):=(x_1+x_2)\log(x_1+x_2)-(\theta-x_1)\log(\theta-x_1)-(1-\theta-x_2)\log(1-\theta-x_2) \\
&&\quad +x_1(\log\beta-\log(\theta-f_1)-\log(x_1))+x_2(\log(1-\beta)-\log(1-\theta-f_2)-\log(x_2))-x_1-x_2
\end{eqnarray*}
The first order derivatives on $x_1$ and $x_2$:
\begin{eqnarray*}
&&\frac{\partial F}{\partial x_1} = \log(x_1+x_2)+\log(\theta-x_1)-\log(x_1)-\log\frac{\beta}{\theta-f_1}=0\\
&&\frac{\partial F}{\partial x_2} = \log(x_1+x_2)+\log(1-\theta-x_2)-\log(x_2)+\log\frac{1-\beta}{1-\theta-f_2}=0
\end{eqnarray*}
Noting $\frac{f_1}{f_2}=\frac{\beta}{1-\beta}$, we can verify that
\[
x_1=f_1, \qquad x_2=f_2
\]
solve the first order conditions. Look at the second order derivatives:
\begin{eqnarray*}
&\frac{\partial^2 F}{\partial x_1^2}&= -\frac{1}{\theta-x_1} -\frac{1}{x_1}+\frac{1}{x_1+x_2}<0\\
&\frac{\partial^2 F}{\partial x_2^2}&= -\frac{1}{1-\theta-x_2} -\frac{1}{x_2}+\frac{1}{x_1+x_2}<0\\
&\frac{\partial^2 F}{\partial x_1\partial x_2} &= \frac{1}{x_1+x_2}
\end{eqnarray*}
\[
\frac{\partial^2 F}{\partial x_1^2}\frac{\partial^2 F}{\partial x_2^2}-\left(\frac{\partial^2 F}{\partial x_1 \partial x_2}\right)^2 =\frac{x_1^2(1-\theta)+x_2^2\theta}{x_1x_2(x_1+x_2)(\theta-x_1)(1-\theta-x_2)}> 0
\]
The Hessian matrix is negative definite. Therefore, $F(x_1,x_2)$ is strictly concave on $(0,\theta)\times(0,1-\theta)$ and reaches its unique global maximum at $(f_1,f_2)$.  Since $F(x_1,x_2)$ is strictly concave and reaches its unique global maximum at $(f_1,f_2)$. The maximum of $F(x_1,x_2)$ on $[\delta,\theta-\delta]\times [\delta, 1-\theta-\delta]\backslash (f_1-\epsilon,f_1+\epsilon)\times(f_2-\epsilon,f_2+\epsilon)$ is on the boundary $\{(x_1,x_2)\vert \lvert x_1-f_1\rvert=\epsilon,\lvert x_2-f_2\rvert=\epsilon\}$. Since the boundary is a compact set, the maximum is attainable, denoted by $F(f_1,f_2)-\eta$, where $\eta>0$.

Note that
\[
\left(\frac{i_1}{(i_1+i_2)(n_1-i_1)(n_2-i_2)i_2}\right)^{1/2} = \left(\frac{x_1}{(x_1+x_2)(\theta-x_1)(1-\theta-x_2)x_2}\right)^{1/2}n^{-1}
\]
changes slowly when $x_1$ and $x_2$ change, compared with $\exp\left(nF(x_1,x_2)\right)$. We have
\[
\frac{P(I_1(\frak{s})=i_1,I_2(\frak{s})=i_2|K(\frak{s})=0)}
{P(I_1(\frak{s})= \lceil m_1\rceil, I_2(\frak{s})= \lceil m_2\rceil|K(\frak{s})=0)}\sim \exp\left(n(F(x_1,x_2)-F(f_1,f_2))\right)<\exp(\eta n).
\]

Therefore,
\begin{eqnarray*}
\frac{\sum_{|i_1-m_1|>\epsilon n \mbox{ or } |i_2-m_2|>\epsilon n}P(I_1(\frak{s})=i_1,I_2(\frak{s})=i_2|K(\frak{s})=0)}
{P(I_1(\frak{s})= \lceil m_1\rceil, I_2(\frak{s})= \lceil m_2\rceil|K(\frak{s})=0)} <\left( n_1 n_2  + \frac{(i_1^*+1)(i_2^*+1)}{\beta^{i_1^*+1}(1-\beta)^{i_2^*+1}}\right)\exp(\eta n)
\end{eqnarray*}
It converges to 0 when $n\to\infty$.

When $K(\frak{s})=k>0$, and $n\to\infty$, similarly,
\[
P(I_1(\frak{s})=i_1, I_2(\frak{s})=i_2|K(\frak{s})=k)
 = B_1{n_1\choose i_1}{n_2\choose i_2}i_1 (i_1+i_2-k-1)!\frac{i_2!}{(i_2-k)!} \mu_1^{i_1}\mu_2^{i_2}\lambda^{-i_1-i_2-k}\lambda_1^k\big/ P(K(\frak{s})=k)
\]
\[
\frac{P(I_1(\frak{s})=i_1+1, I_2(\frak{s})=i_2|K(\frak{s})=k)}{P(I_1(\frak{s})=i_1, I_2(\frak{s})=i_2|K(\frak{s})=k)}
=\frac{(i_1+i_2-k)(n_1-i_1)\mu_1}{i_1\lambda}=\beta\frac{n_1-i_1}{n_1-m_1}\frac{i_1+i_2-k}{i_1}.
\]
\[
\frac{P(I_1(\frak{s})=i_1, I_2(\frak{s})=i_2+1|K(\frak{s})=k)}{P(I_1(\frak{s})=i_1, I_2(\frak{s})=i_2|K(\frak{s})=k)}
=\frac{(i_1+i_2-k)(n_2-i_2)\mu_2}{(i_2+1-k)\lambda}=(1-\beta)\frac{n_2-i_2}{n_2-m_2}\frac{i_1+i_2-k}{i_2+1-k}.
\]
We can use the similar 2-step argument to show that $(I_1(\frak{s})/n,\,I_2(\frak{s})/n)$ converges to $(f_1,\,f_2)$ in probability given $K(\frak{s})=k$.
\end{proof}


\begin{proof}[Proof of Theorem \ref{thm.IdleDistribution}]
To obtain the asymptotic distribution of $I_1,I_2$ as $n\to \infty$ we need to consider, by Theorem~\ref{thm.FluidLimit}, only values $i_1,i_2$ for which $(i_1 - m_1)/n \to 0$ and $(i_2-m_2)/n\to 0$.  We write $i_1=m_1+z_1\sqrt{n},\,i_2=m_2+z_2\sqrt{n}$, with $z_1/ \sqrt{n} \to 0$, $z_2/ \sqrt{n} \to 0$.  Note that
$m_1,\,m_2,\,n_1-m_1,\,n_2-m_2$ are of  the same order of magnitude as $n,n_1,n_2$, and we only consider $i_1,i_2$ of the same order of magnitude.
\begin{eqnarray*}
&&P(I_1(\frak{s})=i_1, I_2(\frak{s})=i_2|K(\frak{s})=0) \\
&&\qquad = B_2\frac{i_1}{i_1+i_2}  \frac{n_1!n_2!}{(n_1-i_1)!i_1!(n_2-i_2)!i_2!} (i_1+i_2)!\mu_1^{i_1}\mu_2^{i_2}\lambda^{-i_1-i_2}\\
&&\qquad \sim B_3i_1^{-i_1}(n_1-i_1)^{-(n_1-i_1)}i_2^{-i_2}(n_2-i_2)^{-(n_2-i_2)}\left(\frac{i_1+i_2}{e}\right)^{i_1+i_2}
          \mu_1^{i_1}\mu_2^{i_2}\lambda^{-i_1-i_2}\\
&&\qquad  \quad\times  \left(\frac{i_1}{(i_1+i_2) i_2 (n_1-i_1)(n_2-i_2)}\right)^{1/2},
\end{eqnarray*}
where the use of Stirling's approximation is justified for large $n$.  Here $B_2=B_1/P(K(\frak{s})=0)$ and $B_3=B_2n_1!n_2!(2\pi)^{-\frac{3}{2}}e^{n}$.

We clearly have:
\[
\left(\frac{i_1}{(i_1+i_2) i_2 (n_1-i_1)(n_2-i_2)}\right)^{1/2} \sim \left(\frac{m_1}{(m_1+m_2) m_2 (n_1-m_1)(n_2-m_2)}\right)^{1/2}.
\]
So we can treat that part as a constant.
Consider
\[
i_1^{-i_1}=(m_1+z_1\sqrt{n})^{-(m_1+z_1\sqrt{n})}=m_1^{-(m_1+z_1\sqrt{n})}\left(1+\frac{z_1\sqrt{n}}{m_1}\right)^{-(m_1+z_1\sqrt{n})}, \]
then from the Taylor expansion of the logarithm function, we have
\begin{eqnarray*}
&&\log\left(\left(1+\frac{z_1\sqrt{n}}{m_1}\right)^{-(m_1+z_1\sqrt{n})}\right) = -(m_1+z_1\sqrt{n})\log\left(1+\frac{z_1\sqrt{n}}{m_1}\right) \\
&&=-(m_1+z_1\sqrt{n})\left(\frac{z_1\sqrt{n}}{m_1}-\frac{z_1^2 n}{2m_1^2}+o\left(\frac{1}{n}\right)\right)
=-z_1\sqrt{n} - \frac{z_1^2n}{2m_1^2} + o(1)
\end{eqnarray*}
Therefore,
\[
\log(i_1^{-i_1})\approx -(m_1+z_1\sqrt{n})\log(m_1)-z_1\sqrt{n} - \frac{z_1^2n}{2m_1}
\]
Similar expansions are valid for $n_1-i_1,\,i_2,\,n_2-i_2$ and $i_1+i_2$.

Therefore, we have
\begin{eqnarray*}
&& \log P(I_1(\frak{s})=i_1, I_2(\frak{s})=i_2|K(\frak{s})=0)   \\
&&\sim \log(B_4)  +z_1\sqrt{n}\log\left(\frac{(n_1-m_1)\mu_1}{\lambda m_1}\right)-\frac{z_1^2nn_1}{2(n_1-m_1)m_1} \\
&&\qquad +z_2\sqrt{n}\log\left(\frac{(n_2-m_2)\mu_2}{\lambda m_2}\right)-\frac{z_2^2nn_2}{2(n_2-m_2)m_2} \\
&&\qquad +(z_1+z_2)\sqrt{n}\log(m_1+m_2) + \frac{(z_1+z_2)^2n}{2(m_1+m_2)},
\end{eqnarray*}
where $B_4= B_3 \left(\frac{m_1}{(m_1+m_2) m_2 (n_1-m_1)(n_2-m_2)}\right)^{1/2}$.

We now use the calculations in Section \ref{sec.fluid} to evaluate all the $\sqrt{n}$ coefficients. By (\ref{eqn.MeanIdle}) we have
\[
\frac{(n_1-m_1)\mu_1}{\lambda m_1} = \frac{1}{\lambda T}, \quad \frac{(n_2-m_2)\mu_2}{\lambda m_2} = \frac{1}{\lambda T},
\]
\[
m_1+m_2 = T \left(\frac{n_1}{T+1/\mu_1}+\frac{n_2}{T+1/\mu_2}\right) = \lambda T
\]
Therefore,
\[
z_1\sqrt{n}\log\left(\frac{(n_1-m_1)\mu_1}{\lambda m_1}\right) + z_2\sqrt{n}\log\left(\frac{(n_2-m_2)\mu_2}{\lambda m_2}\right) + (z_1+z_2)\sqrt{n}\log(m_1+m_2) = 0
\]
We are left with
\begin{equation*}
P(I_1(\frak{s})=i_1, I_2(\frak{s})=i_2|K(\frak{s})=0) \sim \exp(B_4)\exp\left( \frac{(z_1+z_2)^2n}{2(m_1+m_2)}-\frac{z_1^2nn_1}{2(n_1-m_1)m_1}-\frac{z_2^2nn_2}{2(n_2-m_2)m_2}\right).
\end{equation*}
Define
\[
\rho = \left(\frac{(n_1-m_1)(n_2-m_2)m_1m_2}{(n_1m_2+m_1^2)(n_2m_1+m_2^2)}\right)^{\frac{1}{2}},
\]
\[
\sigma_1 = \left(\frac{(n_1-m_1)m_1(n_2m_1+m_2^2)}{n_1m_2^2+n_2m_1^2}\right)^{\frac{1}{2}},
\]
\[
\sigma_2 = \left(\frac{(n_2-m_2)m_2(n_1m_2+m_1^2)}{n_1m_2^2+n_2m_1^2}\right)^{\frac{1}{2}}.
\]
We have
\begin{equation*}
P(I_1(\frak{s})=i_1, I_2(\frak{s})=i_2|K(\frak{s})=0) \sim \exp(B_4)
\exp\left(-\frac{1}{2(1-\rho^2)}\left(\frac{z_1^2n}{\sigma_1^2}+\frac{z_2^2n}{\sigma_2^2}-\frac{2\rho z_1z_2n}{\sigma_1\sigma_2}\right)\right),
\end{equation*}
Therefore,
$(\frac{I_1(\frak{s})-m_1}{\sqrt{n}},\frac{I_2(\frak{s})-m_2}{\sqrt{n}})$ given $K(\frak{s})=0$ converges in distribution as $n\to \infty$  to the bivariate Normal distribution as stated in (\ref{eqn.clt}).

When $K(\frak{s})=k>0$, and $n\to\infty$, similarly,
\begin{eqnarray*}
&&P(I_1(\frak{s})=i_1, I_2(\frak{s})=i_2|K(\frak{s})=k) \\
&& \qquad  = B_1{n_1\choose i_1}{n_2\choose i_2}i_1 (i_1+i_2-k-1)!\frac{i_2!}{(i_2-k)!} \mu_1^{i_1}\mu_2^{i_2}\lambda^{-i_1-i_2-k}\lambda_1^k \big/ P(K(\frak{s})=k)\\
&& \quad \sim B_1 \alpha^{k}\frac{i_1\,i_2^k}{(i_1+i_2)^{k+1}} \frac{n_1!n_2!}{(n_1-i_1)!i_1!(n_2-i_2)!i_2!} (i_1+i_2)!\mu_1^{i_1}\mu_2^{i_2}\lambda^{-i_1-i_2}\big/ P(K(\frak{s})=k).\\
\end{eqnarray*}
We again write $i_1=m_1+z_1\sqrt{n},\,i_2=m_2+z_2\sqrt{n}$, with $z_1/ \sqrt{n} \to 0$, $z_2/ \sqrt{n} \to 0$.  We then have
\[
\frac{i_1\,i_2^k}{(i_1+i_2)^{k+1}}\to \frac{m_1\,m_2^k}{(m_1+m_2)^{k+1}}=\beta(1-\beta)^k.
\]
We can now use the same approximation as for $k=0$ to show that $\left(\frac{I_1(\frak{s})-m_1}{\sqrt{n}},\,\frac{I_2(\frak{s})-m_2}{\sqrt{n}}\right)$ converge to the same bivariate Normal distribution.
\end{proof}

\subsection{Proof of Theorem \ref{thm.Kdist}}
\begin{proof}[Proof of Theorem \ref{thm.Kdist}]
Take a fixed arbitrary $\epsilon\in\left(0,\min\{f_1,f_2\}\right)$. Fix $k>0$, for any $i_1,i_2$ satisfying $|i_1/n-f_1|<\epsilon$, $|i_2/n-f_2|<\epsilon$ and $i_1\geq 1$, from (\ref{eqn.piKI}), noting $\frac{a+c}{b+c}\geq\frac{a}{b}$ for any $0<a\leq b$ and $c>0$, we have
\begin{equation} \label{eq:upperbound}
\begin{aligned}
\frac{\pi(k, i_1,i_2)}{\pi(k-1,i_1,i_2)} &= \frac{i_2-k+1}{i_1+i_2-k}\frac{1}{\alpha}\leq
\frac{i_2+1}{i_1+i_2}\frac{1}{\alpha}\leq\frac{(f_2+\epsilon)n+1}{i_1+(f_2+\epsilon)n}\frac{1}{\alpha}
<\frac{(f_2+\epsilon)n+1}{(f_1-\epsilon)n+(f_2+\epsilon)n}\frac{1}{\alpha} \\
&=\frac{(f_2+\epsilon)n+1}{(f_1+f_2)n}\frac{1}{\alpha}
=\frac{f_2}{(f_1+f_2)\alpha}\left(1+\frac{\epsilon}{f_2}+\frac{1}{f_2n}\right)
=\frac{1-\beta}{\alpha}\left(1+\frac{\epsilon}{f_2}+\frac{1}{f_2n}\right)
\end{aligned}
\end{equation}
Therefore,
\[
\pi(k,i_1,i_2) <\pi(0,i_1,i_2)\left(\frac{1-\beta}{\alpha}\right)^{k} \left(1+\frac{\epsilon}{f_2}+\frac{1}{f_2n}\right)^{k}.
\]
For fixed $k_0>0$,
\[
P\left(K(\frak{s})\geq k_0,I_1(\frak{s})=i_1,I_2(\frak{s})=i_2\right)
<\pi(0,i_1,i_2)\frac{\left(\frac{1-\beta}{\alpha}\right)^{k_0}
\left(1+\frac{\epsilon}{f_2}+\frac{1}{f_2n}\right)^{k_0}}{1-\left(\frac{1-\beta}{\alpha}\right)\left(1+\frac{\epsilon}{f_2}+\frac{1}{f_2n}\right)}.
\]
Note the above inequality is valid for any $i_1,i_2$ satisfying $|i_1/n-f_1|<\epsilon$, $|i_2/n-f_2|<\epsilon$, we have
\[
\frac{P\left(K(\frak{s})\geq k_0,|I_1(\frak{s})/n-f_1|<\epsilon,|I_2(\frak{s})-f_2|<\epsilon\right)}{P\left(K(\frak{s})=0,|I_1(\frak{s})/n-f_1|<\epsilon,|I_2(\frak{s})-f_2|<\epsilon\right)}
<\frac{\left(\frac{1-\beta}{\alpha}\right)^{k_0}
\left(1+\frac{\epsilon}{f_2}+\frac{1}{f_2n}\right)^{k_0}}{1-\left(\frac{1-\beta}{\alpha}\right)\left(1+\frac{\epsilon}{f_2}+\frac{1}{f_2n}\right)}.
\]
From Theorem \ref{thm.FluidLimit}, there exists an $N_1$ such that when $n>N_1$,
\[
P(|I_1(\frak{s})/n-f_1| < \epsilon, |I_2(\frak{s})/n-f_2| < \epsilon )>1-\epsilon.
\]
Then we have,
\begin{align*}
P(K(\frak{s}) \geq k_0)& <P\left(\left.K(\frak{s})\geq k_0\right||I_1(\frak{s})/n-f_1|\geq \epsilon,|I_2(\frak{s})/n-f_2|\geq \epsilon \right) P\left(|I_1(\frak{s})/n-f_1|< \epsilon,|I_2(\frak{s})/n-f_2|< \epsilon \right)\\
&\qquad+ P\left(|I_1(\frak{s})/n-f_1|\geq \epsilon,|I_2(\frak{s})/n-f_2|\geq \epsilon\right)\\
&< P\left(\left.K(\frak{s})=0\right||I_1(\frak{s})/n-f_1|\geq \epsilon,|I_2(\frak{s})/n-f_2|\geq \epsilon \right)
\frac{\left(\frac{1-\beta}{\alpha}\right)^{k_0}
\left(1+\frac{\epsilon}{f_2}+\frac{1}{f_2n}\right)^{k_0}}{1-\left(\frac{1-\beta}{\alpha}\right)\left(1+\frac{\epsilon}{f_2}+\frac{1}{f_2n}\right)} (1-\epsilon) + \epsilon \\
&< \frac{\left(\frac{1-\beta}{\alpha}\right)^{k_0}
\left(1+\frac{\epsilon}{f_2}+\frac{1}{f_2n}\right)^{k_0}}{1-\left(\frac{1-\beta}{\alpha}\right)\left(1+\frac{\epsilon}{f_2}+\frac{1}{f_2n}\right)} (1-\epsilon) + \epsilon.
\end{align*}
This upper bound can be arbitrarily close to 0 when choosing $\epsilon$, $n>N_1$ and $k_0$. Therefore, we have shown the tightness of $K(\frak{s})$, that is
\begin{equation}\label{eq:tightness}
\sum_{k=0}^\infty \lim_{n\to\infty}P\left(K(\frak{s})=k\right) = 1.
\end{equation}
Using
\[
P(K(\frak{s})=k) = P\left(K(\frak{s})=k,|I_1(\frak{s})/n-f_1|<\epsilon,|I_2(\frak{s})/n-f_2|<\epsilon\right)
+P\left(K(\frak{s})=k,|I_1(\frak{s})/n-f_1|\geq\epsilon,|I_2(\frak{s})/n-f_2|\geq\epsilon\right),
\]
for fixed $k>0$, when $n>N_1$, the ratio $\frac{P(K(\frak{s})=k)}{P(K(\frak{s})=k-1)}$ is lower bounded by
\[\frac{P\left(K(\frak{s})=k,|I_1(\frak{s})/n-f_1|< \epsilon,|I_2(\frak{s})/n-f_2|<\epsilon\right)}
{P\left(K(\frak{s})=k-1,|I_1(\frak{s})/n-f_1|< \epsilon,|I_2(\frak{s})/n-f_2|<\epsilon\right)+\epsilon}
\]
and upper bounded by
\[
\frac{P\left(K(\frak{s})=k,|I_1(\frak{s})/n-f_1|<\epsilon,|I_2(\frak{s})/n-f_2|<\epsilon\right)+\epsilon}
{P\left(K(\frak{s})=k-1,|I_1(\frak{s})/n-f_1|< \epsilon,|I_2(\frak{s})/n-f_2|<\epsilon\right)}.
\]
For any $i_1,i_2$ satisfying $|i_1/n-f_1|<\epsilon$, $|i_2/n-f_2|<\epsilon$ and $i_1\geq 1$, in addition to (\ref{eq:upperbound}), we have the lower bound
\begin{align*}
\frac{\pi(k, i_1,i_2)}{\pi(k-1,i_1,i_2)}= \frac{i_2-k+1}{i_1+i_2-k}\frac{1}{\alpha}\geq\frac{(f_2-\epsilon)n-k+1}{i_1+(f_2-\epsilon)n-k}\frac{1}{\alpha}
>\frac{(f_2-\epsilon)n-k+1}{(f_1+\epsilon)n+(f_2-\epsilon)n-k}\frac{1}{\alpha} =\frac{(f_2-\epsilon)n-k+1}{(f_1+f_2)n-k}\frac{1}{\alpha}.
\end{align*}
Now we have
\[
\frac{\pi(k, i_1,i_2)}{\pi(k-1,i_1,i_2)} \in \left[\frac{(f_2-\epsilon)n-k+1}{(f_1+f_2)n-k}\frac{1}{\alpha}, \frac{(f_2+\epsilon)n+1}{(f_1+f_2)n}\frac{1}{\alpha} \right].
\]
Therefore,
\[
\frac{\sum_{|i_1/n-f_1|< \epsilon,|i_2/n-f_2|<\epsilon}\pi(k,i_1,i_2)}{\sum_{|i_1/n-f_1|< \epsilon,|i_2/n-f_2|<\epsilon}\pi(k-1,i_1,i_2)}\in \left[\frac{(f_2-\epsilon)n-k+1}{(f_1+f_2)n-k}\frac{1}{\alpha}, \frac{(f_2+\epsilon)n+1}{(f_1+f_2)n}\frac{1}{\alpha} \right],
\]
that is,
\begin{equation}\label{eq:bounds}
\frac{P\left(K(\frak{s})=k,|I_1(\frak{s})/n-f_1|< \epsilon,|I_2(\frak{s})/n-f_2|<\epsilon\right)}
{P\left(K(\frak{s})=k-1,|I_1(\frak{s})/n-f_1|< \epsilon,|I_2(\frak{s})/n-f_2|<\epsilon\right)}\in \left[\frac{(f_2-\epsilon)n-k+1}{(f_1+f_2)n-k}\frac{1}{\alpha}, \frac{(f_2+\epsilon)n+1}{(f_1+f_2)n}\frac{1}{\alpha} \right].
\end{equation}
For fixed $k$, as $n\to\infty$, the lower bound and the upper bound in (\ref{eq:bounds}) both converge to $\frac{1-\beta}{\alpha}$. Noting $\epsilon$ can be arbitrarily close to 0, we have
\[
\lim_{n\to\infty} \frac{P(K(\frak{s})=k)}{P(K(\frak{s})=k-1)} = \frac{1-\alpha}{\beta}
\]
This together with the tightness (\ref{eq:tightness}) proves (\ref{eqn.limK}).
\end{proof}

\end{document}